\def\A{{\mathcal{A}}} 
\def\B{{\mathcal{B}}}
\def\F{{\mathcal{F}}} 
\def\G{{\mathcal{G}}} 
\def\I{{\mathcal{I}}}
\def\M{{\mathcal{M}}}  
\def\P{{\mathcal{P}}}  
\def\R{{\mathbb{R}}}
\def\U{{\mathcal{U}}} 
\theoremstyle{plain}
\newtheorem{thm}{Theorem}[section]
\newtheorem{lem}{Lemma}[section]
\newtheorem{pro}{Proposition}[section]
\theoremstyle{definition}
\newtheorem{df}{Definition}[section]
\newtheorem{ex}{Example}[section]
\newtheorem{rem}{Remark}[section]
\numberwithin{equation}{section}
\title{Maharam-Types and Lyapunov's Theorem for Vector Measures on Locally Convex Spaces without Control Measures\thanks{This version of the paper was completed during Sagara's visit to the Johns Hopkins University. This research is supported by a Grant-\hspace{0pt}in-\hspace{0pt}Aid for Scientific Research (No.\,26380246) from the Ministry of Education, Culture, Sports, Science and Technology, Japan.}}
\date{\today}
\author{M. Ali Khan \\
{\small Department of Economics, The Johns Hopkins University} \\[-4pt]
{\small Baltimore, MD 21218, United States} \\[-4pt]
{\footnotesize e-mail: akhan@jhu.edu}
\and
\\
Nobusumi Sagara\thanks{Corresponding author.} \\
{\small Department of Economics, Hosei University} \\[-4pt]
{\small 4342, Aihara, Machida, Tokyo 194--0298, Japan} \\[-4pt]
{\footnotesize e-mail: nsagara@hosei.ac.jp}}
\begin{document}
\begin{titlepage}
\maketitle
\setcounter{page}{0}
\thispagestyle{empty}
\end{titlepage}
\clearpage

\begin{abstract}
 We formulate the saturation property for vector measures in lcHs as a nonseparability condition on the derived Boolean $\sigma$-\hspace{0pt}algebras by drawing on the topological structure of vector measure algebras. We exploit a Pettis-\hspace{0pt}like notion of vector integration in lcHs, the Bourbaki--\hspace{0pt}Kluv\'anek--\hspace{0pt}Lewis integral, to derive an exact version of the Lyapunov convexity theorem in lcHs without the BDS property. We apply our Lyapunov convexity theorem to the bang-\hspace{0pt}bang principle in Lyapunov control systems in lcHs to provide a further characterization of the saturation property.   

\bigskip

\noindent
{\bfseries Key words:} vector measure; locally convex Hausdorff space; Maharam-\hspace{0pt}type; saturation; Lyapunov convexity theorem; multifunction; control system; bang-\hspace{0pt}bang principle.

\smallskip

\noindent
{\bfseries MSC 2000:} Primary: 28B05, 46G10; Secondary: 28B20, 49J30.
\end{abstract}

\tableofcontents
\clearpage

\section{Introduction}
Lyapunov's convexity theorem on the range of an atomless vector measure has proved to be of tremendous use and significance in applied work in the theories of optimal control, of statistical decisions, of Nash equilibria in large games, and of Walrasian equilibria in  mathematical economics; see the references of the extended introduction in \cite{ks14}. As explored in \cite{ks13,ks14,ks15} at length, the so-\hspace{0pt}called saturation property is an indispensable structure on measure spaces for the Lyapunov convexity theorem to be valid without the closure operation for measures taking values in infinite-\hspace{0pt}dimensional spaces. This property of  measure spaces is by now a well established notion in measure theory, as formulated in the scalar measure case (see \cite{fr12,hk84,ma42}). For vector measures with values in locally convex Hausdorff spaces (lcHs), the notion of saturation is easily adapted under the Bartle--\hspace{0pt}Dunford--\hspace{0pt}Schwartz (BDS) property of lcHs, a property  automatically satisfied in Banach spaces (see \cite{ks13,ks15}), to guarantee the existence of control measures for any vector measure.  

A natural question pertains to a fruitful  formulation of the notion of saturation for vector measures in lcHs {\it without} control measures. Such a motivation stems from the recent development on the Lyapunov convexity theorem in Banach spaces, and we provide an answer to this question here. The main contribution of this answer  hinges on what we see as the following threefold contribution:
\begin{enumerate}[(i)]
\item We formulate the saturation property for vector measures in lcHs as a nonseparability condition on the Boolean $\sigma$-\hspace{0pt}algebras along the lines of \cite{fr12,ks09,ma42,ri92} by drawing on the topological structure of vector measure algebras established in the important monograph \cite{kk75}.  

\item We exploit a Pettis-\hspace{0pt}like notion of vector integration in lcHs, the Bourbaki--\hspace{0pt}Kluv\'anek--\hspace{0pt}Lewis integral, (see \cite{bo59,kl70,le70}) to derive an exact version of the Lyapunov convexity theorem in lcHs without the BDS property, a result  that is a natural extension of those presented in \cite{ks13,ks15} (necessity and sufficiency). 

\item We apply our Lyapunov convexity theorem to the bang-\hspace{0pt}bang principle established in \cite{ks14} and Lyapunov control systems in lcHs explored in \cite{kk75,kn75} to provide a further characterization of saturation.   
\end{enumerate}

The organization of the paper is as follows. In Section 2 we introduce the notion of Maharam types and saturation in Boolean $\sigma$-\hspace{0pt}algebras and define vector measure algebras in lcHs. Section 3 deals with the space of integrable functions with respect to a vector measure in lcHs and investigates its topological properties, especially  completeness and separability. Section 4 presents the main result of the paper in which the equivalence of the saturation property of a vector measure algebra and the Lyapunov convexity theorem is established in full generality. Section 5 contains the application of the main result to the bang-\hspace{0pt}bang principle and Lyapunov control systems.

\section{Preliminaries}
\subsection{Boolean Algebras and Maharam Types}
Let $\F$ be a Boolean algebra with binary operations $\vee$ and $\wedge$, and a unary operation $^c$, endowed with the order $\le$ given by $A\le B\Longleftrightarrow A=A\wedge B$, where $\O=\Omega^c$ is the smallest element in $\F$ and $\Omega=\O^c$ is the largest element in $\F$. A subset $\I$ of $\F$ is an \textit{ideal} if $\O\in \I$, $A\vee B\in \I$ for every $A,B\in \I$ and $B\le A$ with $A\in \I$ implies $B\in \I$. The \textit{principal ideal} $\F_E$ generated by $E\in \F$ is an ideal of $\F$ given by $\F_E=\{ A\in \F\mid A\le E \}$, which is a Boolean algebra with unit $E$. An element $A\in \F$ is an \textit{atom} if $A\ne \O$ and $E\le A$ with $E\in \F$ implies either $E=\O$ or $E=A$; $\F$ is \textit{nonatomic} if it has no atom. 

A \textit{subalgebra} of $\F$ is a subset of $\F$ that contains $\Omega$ and is closed under the Boolean operations $\vee$, $\wedge$ and ${}^c$. A subalgebra $\U$ of $\F$ is \textit{order-\hspace{0pt}closed} with respect to the order $\le$ if any nonempty upwards directed subsets of $\U$ with its supremum in $\F$ has the supremum in $\U$. A subset $\U\subset \F$ \textit{completely generates} $\F$ if the smallest order closed subalgebra in $\F$ containing $\U$ is $\F$ itself. The \textit{Maharam type} of a Boolean algebra $\F$ is the smallest cardinal of any subset $\U\subset \F$ which completely generates $\F$. By $\kappa(\F)$ we denote the Maharam type of $\F$. A Boolean algebra is \textit{saturated} if for every $E\in \F$ with $E\ne \O$ the Maharam type $\kappa(\F_E)$ of the principal ideal $\F_E$ generated by $E$ is uncountable. A Boolean algebra $\F$ is nonatomic if and only if $\kappa(\F_E)$ is infinite for every $E\in \F$ with $E\ne \O$ (see \cite[Proposition 2.1]{ks14}).

\subsection{Vector Measure Algebras in lcHs}
Let $(\Omega,\F)$ be a measurable space and $X$ be a locally convex Hausdorff space (briefly, lcHs). A set function $m:\F\to X$ is \textit{countably additive} if for every pairwise disjoint sequence $\{ A_n \}$ in $\F$, we have $m(\bigcup_{n=1}^\infty A_n)=\sum_{n=1}^\infty m(A_n)$, where the series is unconditionally convergent with respect to the locally convex topology on $X$. It is well known that if $m$ is countably additive with respect to ``some" locally convex topology on $X$ that is consistent with the dual pair $\langle X,X^* \rangle$, then it is countably additive with respect to ``any" locally consistent convex topology on $X$ (see \cite[Proposition 4]{tw70}). This is a consequence of Orlicz--\hspace{0pt}Pettis theorem (see \cite[Theorem 1]{mc67}). Therefore, the countable additivity of vector measures is independent of the particular topologies lying between the weak and Mackey topologies of $X$. 

For a vector measure $m:\F\to X$, a set $N\in \F$ is \textit{$m$-\hspace{0pt}null} if $m(A\cap N)=\bold{0}$ for every $A\in \F$. An equivalence relation $\sim$ on $\F$ is given by $A\sim B$ if and only if $A\triangle B$ is $m$-\hspace{0pt}null, where $A\triangle B$ is the symmetric difference of $A$ and $B$ in $\F$. The collection of equivalence classes is denoted by $\widehat{\F}=\F/\sim$ and its generic element $\widehat{A}$ is the equivalence class of $A\in \F$. The lattice operations $\vee$ and $\wedge$ in $\widehat{\F}$ are given in a usual way by $\widehat{A}\vee \widehat{B}=\widehat{A\cup B}$ and $\widehat{A}\wedge \widehat{B}=\widehat{A\cap B}$. The unary operation ${}^c$ in $\widehat{\F}$ is obtained for taking complements in $\widehat{\F}$ by $\widehat{A}^{c}=\widehat{(A^c)}$. Under these operations $\widehat{\F}$ is a Boolean $\sigma$-\hspace{0pt}algebra. Let $\hat{m}:\widehat{\F}\to X$ be an $X$-\hspace{0pt}valued countably additive function on $\widehat{\F}$ defined by $\hat{m}(\widehat{A})=m(A)$ for $\widehat{A}\in \widehat{\F}$. Then the pair $(\widehat{\F},\hat{m})$ is called a \textit{vector measure algebra} induced by $m$. Denote by $\F_E=\{ A\cap E\mid A\in \F \}$, a $\sigma$-\hspace{0pt}algebra of $E\in \F$ inherited from $\F$, and $(\widehat{\F_E},\hat{m})$ a vector measure algebra induced by the restriction of $m$ to $\F_E$. Then $\widehat{\F_E}$ is the principal ideal of $\widehat{\F}$ generated by the element $\widehat{E}\in \widehat{\F}$. A $\sigma$-\hspace{0pt}algebra $\F$ is \textit{$m$-\hspace{0pt}essentially countably generated} if there exists a countably generated sub $\sigma$-\hspace{0pt}algebra $\G$ of $\F$ such that $\widehat{\G}=\widehat{\F}$. 

A set $A\in \F$ is an \textit{atom} of $m$ if $m(A)\ne \bold{0}$ and $\widehat{A}$ is an atom of $\widehat{\F}$. If $m$ has no atom, it is said to be \textit{nonatomic}. The Maharam type of a vector measure $m:\F\to X$ is defined to be $\kappa(\widehat{\F})$. Thus, $\kappa(\widehat{\F})$ is countable if and only if $\F$ is $m$-\hspace{0pt}essentially countably generated. Hence, $m$ is nonatomic if and only if $\kappa(\widehat{\F_E})$ is infinite for every $m$-\hspace{0pt}nonnull $E\in \F$. A vector measure space $(\Omega,\F,m)$ (or\ a vector measure $m$) is \textit{saturated} if $\kappa(\widehat{\F_E})$ is uncountable for every $m$-\hspace{0pt}nonnull $E\in \F$. Let $Y$ be a lcHs. A vector measure $n:\F\to Y$ is \textit{absolutely continuous} (or \textit{$m$-\hspace{0pt}continuous}) with respect to a vector measure $m:\F\to X$ if every $m$-\hspace{0pt}null set is $n$-\hspace{0pt}null. It is evident that for every lcHs $Y$ a vector measure $n:\F\to Y$ is saturated (resp.\ nonatomic) if and only if there exists a  saturated (resp.\ nonatomic) vector measure $m:\F\to X$ with respect to which $n$ is absolutely continuous.

\section{The $L^1$-Space of Vector Measures}
\subsection{Integrals with Respect to Vector Measures}
The following Pettis-\hspace{0pt}like notion of the integral of measurable functions with respect to a vector measure was introduced in \citet{bo59} and elaborated independently by \citet{kl70,le70,le72}. For a detailed treatment of this integral, see \cite{kk75,osr08,pa08}. 

\begin{df}
Let $\langle x^*,m \rangle:\F \to \R$ be the scalar measure defined by $\langle x^*,m \rangle(A):=\langle x^*,m(A) \rangle$ with $x^*\in X^*$ and $A\in \F$. A measurable function $f:\Omega\to \R$ is \textit{$m$-\hspace{0pt}integrable} if it is integrable with respect to the scalar measure $\langle x^*,m \rangle$ for every $x^*\in X^*$ and for every $A\in \F$ there exists a vector $x_A\in X$ such that
$$
\left\langle x^*,x_A \right\rangle=\int_Afd\langle x^*,m \rangle \quad \text{for every $x^*\in X^*$}. 
$$ 
\end{df}

Since the dual space $X^*$ is a total family of linear functionals on a lcHs $X$, the vector $x_A$ is unique, which we denote by $\int_Afdm$. Unlike the definition of the integrals with respect to $m$ formulated in \cite{ks14} (see also the references therein), no assumption is made about the completeness of $X$ and the existence of a scalar measure with respect to which $m$ is absolutely continuous is unnecessary. 

The vector space of all $m$-\hspace{0pt}integrable functions is denoted by $L(m)$. Let $\P$ be a separating family of seminorms in $X$ that generates the locally convex topology $\tau$. Let $U_p^\circ$ denotes the polar of the set $U_p=\{ x\in X\mid p(x)\le 1 \}$, that is, $U_p^\circ=\{ x^*\in X^*\mid |\langle x^*,x \rangle|\le 1 \ \forall x\in U_p \}$. Each $p\in \P$ induces a seminorm $p(m)$ in $L(m)$ via the formula
$$
p(m)(f)=\sup_{x^*\in U_p^\circ}\int |f|d|\langle x^*,m \rangle|, \quad f\in L(m),
$$
where $|\langle x^*,m \rangle|$ denotes the variation of the scalar measure $\langle x^*,m \rangle$. The above seminorms turn $L(m)$ into a locally convex space. The quotient space of $L(m)$ modulo the subspace $\bigcap_{p\in \P}p(m)^{-1}(0)$ of all $m$-\hspace{0pt}null functions is denoted by $L^1(m)$, which is a lcHs with its topology denoted by $\tau(m)$. 

Denote by $L^\infty(m)$ (the equivalence classes of) the space of all $m$-\hspace{0pt}essentially bounded measurable functions $f$ on $\Omega$, endowed with the $m$-\hspace{0pt}essentially supremum norm
$$
\| f \|_\infty=\inf\{ \alpha>0\mid \text{$\{ \omega\in \Omega\mid |f(\omega)|>\alpha \}$ is $m$-\hspace{0pt}null} \}.
$$
Recall that $X$ is said to be \textit{sequentially complete} if every Cauchy sequence in $X$ converges. If $X$ is a sequentially complete lcHs, then $L^\infty(m)\subset L^1(m)$ (see \cite[Lemma II.3.1]{kk75} or \cite[Theorem 4.1.9$'$]{pa08}) and $(L^\infty(m),\| \cdot \|_\infty)$ is a Banach space (see \cite[Theorem 4.5.8]{pa08}). 

A linear operator $T_m:L^1(m)\to X$ defined by $T_mf=\int fdm$ for $f\in L^1(m)$ is called an \textit{integration operator} of $m$. We also denote $T_mf$ by $m(f)$. Since $L^\infty(m)\subset L^1(m)$ whenever $X$ is sequentially complete, one can restrict the integration operator $T_m$ to $L^\infty(m)$ endowed with the $m$-\hspace{0pt}essential sup norm. Hence, if $X$ is a sequentially complete lcHs, then the integration operator $T_m:L^\infty(m)\to X$ is continuous (see \cite[Lemma II.3.1]{kk75}). Moreover, the following continuity result of the integration operator is true without any completeness assumption on $X$. 

\begin{lem}
\label{lem1}
The integration operator $T_m:L^1(m)\to X$ is continuous for the weak topologies\footnote{See \cite{ok93} for the specification of the dual space of $L^1(m)$.}of $L^1(m)$ and $X$.
\end{lem}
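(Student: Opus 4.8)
The plan is to reduce the statement to the standard characterization of weak continuity for linear maps between locally convex spaces: such a map is continuous for the respective weak topologies precisely when its transpose carries the dual of the target into the dual of the domain. Concretely, $T_m$ is continuous for $\sigma(L^1(m),L^1(m)^*)$ and $\sigma(X,X^*)$ if and only if $x^*\circ T_m\in L^1(m)^*$ for every $x^*\in X^*$. Thus the whole argument amounts to fixing an arbitrary $x^*\in X^*$ and verifying that the linear functional $f\mapsto \langle x^*,T_mf\rangle$ is continuous on $(L^1(m),\tau(m))$.

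First I would identify this functional explicitly. By the definition of the Bourbaki--Kluv\'anek--Lewis integral, taking $A=\Omega$ yields $\langle x^*,T_mf\rangle=\langle x^*,\int_\Omega f\,dm\rangle=\int_\Omega f\,d\langle x^*,m \rangle$ for every $f\in L^1(m)$. Hence $|\langle x^*,T_mf\rangle|\le \int_\Omega|f|\,d|\langle x^*,m \rangle|$, where $|\langle x^*,m \rangle|$ denotes the variation of the scalar measure $\langle x^*,m \rangle$.

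Next I would dominate this by one of the generating seminorms $p(m)$. Since $x^*$ is continuous on $X$ and (after passing, if necessary, to the directed family of finite maxima) the topology $\tau$ is generated by $\P$, there exist $p\in \P$ and a constant $C>0$ with $|\langle x^*,x \rangle|\le C\,p(x)$ for all $x\in X$; equivalently $C^{-1}x^*\in U_p^\circ$. Using the positive homogeneity of the variation, $|\langle x^*,m \rangle|=C\,|\langle C^{-1}x^*,m \rangle|$, so that
$$
|\langle x^*,T_mf\rangle|\le \int_\Omega|f|\,d|\langle x^*,m \rangle| = C\int_\Omega|f|\,d|\langle C^{-1}x^*,m \rangle| \le C\,p(m)(f).
$$
This bound shows simultaneously that $f\mapsto \langle x^*,T_mf\rangle$ annihilates the $m$-null functions, so that it is well defined on the quotient $L^1(m)$, and that it is $\tau(m)$-continuous; hence $x^*\circ T_m\in L^1(m)^*$. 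As $x^*\in X^*$ was arbitrary, the transpose condition holds and $T_m$ is weakly continuous.

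The argument is essentially routine once the integral is rewritten scalarly; the only point requiring care is the passage from the continuity of a single functional $x^*$ to domination by a single seminorm $p(m)$, which is why I would either assume $\P$ directed or replace the finitely many seminorms controlling $x^*$ by their maximum. Note that no completeness of $X$ enters anywhere, in accordance with the assertion.
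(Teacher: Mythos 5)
Your proof is correct and follows essentially the same route as the paper's: both reduce the claim to showing that $f\mapsto\int f\,d\langle x^*,m\rangle$ is a $\tau(m)$-continuous functional on $L^1(m)$ by rescaling $x^*$ into $U_p^\circ$ and bounding by $p(m)(f)$, the weak--weak continuity then being exactly the transpose condition you invoke (the paper just spells it out with nets). If anything, you are slightly more careful than the paper on the one delicate point, namely that a general $x^*$ is only dominated by a finite maximum of seminorms from $\P$, which you handle by passing to the directed family of finite maxima.
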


\begin{proof}
We first show that the finite signed measure $x^*m$ is a continuous linear functional on $L^1(m)$ for every $x^*\in X^*$ with respect to $\tau(m)$-\hspace{0pt}topology. To this end, let $\{ f_\alpha \}$ be a net in $L^1(m)$ such that $p(m)(f_\alpha-f)$ for every $p\in \P$. If $x^*\in X^*$ vanishes on $U_p$, then $x^*\in U_p^\circ$, and hence, we obtain
$$
\left| \int f_\alpha d\langle x^*,m \rangle-\int fd\langle x^*,m \rangle \right|\le \int|f_\alpha-f|d|\langle x^*,m \rangle|\le p(m)(f_\alpha-f)\to 0.
$$
If $x^*\ne 0$ on $U_p$, define $\tilde{p}(m)(x^*)=\sup_{x\in U_p}|\langle x^*,x \rangle|$. By normalization, we have $y^*:=x^*/\tilde{p}(m)(x^*)\in U_p^\circ$, and hence
\begin{align*}
\left| \int f_\alpha d\langle x^*,m \rangle-\int fd\langle x^*,m \rangle \right|
& \le \tilde{p}(m)(x^*)\int|f_\alpha-f|d|\langle y^*,m \rangle| \\
& \le [\tilde{p}(m)(x^*)][p(m)(f_\alpha-f)]\to 0.
\end{align*}
Therefore, $\int f_\alpha d\langle x^*,m \rangle\to \int f d\langle x^*,m \rangle$ for every $x^*\in X^*$. This means that $\langle x^*,m \rangle$ is an element of the dual space $(L^1(m))^*$ for every $x^*\in X^*$. Let $\{ g_\alpha \}$ be a net in $L^1(m)$ that converges weakly to $g\in L^1(m)$. Then for every $x^*\in X^*$, we have
\begin{align*}
\langle x^*,T_mg_\alpha \rangle=\int g_\alpha d\langle x^*,m \rangle\to \int g d\langle x^*,m \rangle=\langle x^*, T_mg \rangle.
\end{align*}
Hence, $T_m$ is continuous for the weak topologies of $L^1(m)$ and $X$.
\end{proof}

\subsection{Completeness of $L^1(m)$}
A measure space $(\Omega,\F,\mu)$ (possibly $\mu$ is an infinite measure) is \textit{localizable} if for every continuous linear functional $\varphi$ on $L^1(\mu)$ there is a bounded measurable function $g:\Omega\to \R$ such that $\varphi(f)=\int fgd\mu$ for every $f\in L^1(\mu)$ (see \cite[Section I.3]{kk75}). When $\mu$ is $\sigma$-\hspace{0pt}finite, localizability is automatically satisfied because the dual of $L^1(\mu)$ is $L^\infty(\mu)$. It is well known that such duality is no longer true if $\mu$ is not $\sigma$-\hspace{0pt}finite. A measure space $(\Omega,\F,\mu)$ is localizable if and only if its measure algebra is a complete Boolean algebra as a partially ordered set (see \cite[Theorem 5.1]{se50}). 

For $p\in \P$, the \textit{$p$-\hspace{0pt}semivariation} of a vector measure $m:\F\to X$ is a set function $\| m \|_p:\F\to \R$ defined by
$$
\| m \|_p(A)=\sup_{x^*\in U_p^\circ}|\langle x^*,m \rangle|(A), \quad A\in \F. 
$$
The $p$-\hspace{0pt}semivariation $\| m \|_p$ of $m$ is bounded, monotone and countably subadditive with the following estimate (see \cite[Lemma II.1.2]{kk75}):
$$
\sup_{E\subset A}p(m(E))\le\| m \|_p(A)\le 2\sup_{E\subset A}p(m(E)). 
$$
A finite (scalar) measure $\mu_p$ is a \textit{$p$-\hspace{0pt}control measure} of $m$ whenever $\mu_p(A)=0$ if and only if $\| m \|_p(A)=0$. If $X$ is a lcHs, then for every $p\in \P$ there exists a $p$-\hspace{0pt}control measure of $m$ (see \cite[Corollaries II.1.2 on p.\,19 and II.1.1 on p.\,21]{kk75}). A vector measure $m:\F\to X$ is \textit{absolutely continuous} with respect to a scalar measure $\mu$ if $\mu(A)=0$ implies that $m(A)=\bold{0}$. A finite measure $\mu$ is a \textit{control measure} of $m$ whenever $\mu(A)=0$ if and only if $m(A\cap E)=\bold{0}$ for every $E\in \F$. 

Since each characteristic function of $\F$ is identified with an element of $L^1(m)$,  we can restrict the locally convex Hausdorff topology $\tau(m)$ of $L^1(m)$ to the vector measure algebra $(\widehat{\F},\hat{m})$ of $m$. Thus, the relative $\tau(m)$-\hspace{0pt}topology on $\widehat{\F}$ is generated by a family of semimetrics $\{ d_p\mid p\in \P \}$ on $\widehat{\F}$ by the formula
$$
d_p(\widehat{A},\widehat{B}):=p(m)(\chi_A-\chi_B)=\| m \|_p(A\triangle B), \quad A,B\in \F.
$$
Denote by $L^1_E(m)=\{ f\chi_E\mid f\in L^1(m) \}$ the vector subspace consisting of $m$-\hspace{0pt}integrable functions on $\Omega$ restricted to $E\in \F$ and similarly, $L^\infty_E(m)=\{ f\chi_E\mid f\in L^\infty(m) \}$ the vector subspace consisting of $m$-\hspace{0pt}essentially bounded functions on $\Omega$ restricted to $E$. 

Without completeness, the function space $L^1(m)$ would be useless in the limit operation of integrals. To overcome this difficulty, \citet{kk75} introduced the notion of closed vector measures in lcHs.  

\begin{df}
A vector measure $m:\F\to X$ is \textit{closed} if $\widehat{\F}$ is complete in the $\tau(m)$-\hspace{0pt}topology. 
\end{df}

\begin{quote}
Closed vector measures are those for which most of the classical theory of $L^1$ spaces carries over, especially results concerning completeness. The phenomenon of non-\hspace{0pt}closed measures is observable only if the range space is not metrizable. (\cite[p.\,67]{kk75}.)
\end{quote}

A sufficient condition for the closedness of $m$ given by \citet[Proposition 1]{ri84} is the metrizability of the range $m(\F)$ in a lcHs $X$. In particular, if $X$ is a Fr\'echet space, then $L^1(m)$ is also a Fr\'echet space (see \cite[Theorems IV.4.1 and IV.7.1]{kk75} and \cite[Theorem 1]{fnr}). The significance of the notion of closed vector measures is exemplified by the next characterization of the completeness of $L^1(m)$ attributed to \cite{kk75}, which can be slightly generalized as the current form, where the assumption of quasicompleteness is replaced by that of sequentially completeness (see \cite[Proposition 1]{ri83} and \cite[Proposition 3]{fnr}).  

\begin{pro}
\label{pro1}
Let $X$ be a sequentially complete lcHs. Then a vector measure $m:\F\to X$ is closed if and only if $L^1(m)$ is $\tau(m)$-\hspace{0pt}complete.
\end{pro}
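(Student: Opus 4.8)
The plan is to identify $\widehat{\F}$ with a subset of $L^1(m)$ and to read both implications off the single principle that a closed subspace of a complete uniform space is complete. Concretely, the map $\widehat{A}\mapsto\chi_A$ embeds $\widehat{\F}$ into $L^1(m)$ (each $\chi_A$ is $m$-integrable, with $\int_E\chi_A\,dm=m(E\cap A)$), and since $d_p(\widehat{A},\widehat{B})=p(m)(\chi_A-\chi_B)$ for every $p\in\P$ this embedding is a uniform isomorphism of $\widehat{\F}$ onto the set $S\subset L^1(m)$ of (classes of) characteristic functions. Thus $m$ is closed, i.e. $\widehat{\F}$ is $\tau(m)$-complete, precisely when $S$ is complete, and the whole statement becomes a comparison between completeness of $S$ and of the ambient $L^1(m)$.

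For the implication that $\tau(m)$-completeness of $L^1(m)$ forces $m$ to be closed, it suffices to show that $S$ is $\tau(m)$-closed, for then $S$ is complete as a closed subset of a complete space, and so is $\widehat{\F}$. Here I would exploit that the seminorm $p(m)(h)=\sup_{x^*\in U_p^\circ}\int|h|\,d|\langle x^*,m\rangle|$ depends only on $|h|$ and is monotone, so that every $1$-Lipschitz $\varphi:\R\to\R$ with $\varphi(0)=0$ and $|\varphi(t)|\le|t|$ satisfies $p(m)(\varphi\circ f-\varphi\circ g)\le p(m)(f-g)$. Applying this to $\varphi_1(t)=(-t)\vee0$, $\varphi_2(t)=(t-1)\vee0$ and $\varphi_3(t)=t\wedge(1-t)$, each of which \emph{vanishes on} $\{0,1\}$, one sees that if $\chi_{A_\alpha}\to f$ in $\tau(m)$ then $p(m)(\varphi_i\circ f)=\lim_\alpha p(m)(\varphi_i\circ f-\varphi_i\circ\chi_{A_\alpha})=0$ for every $p$ and $i=1,2,3$; hence $f\ge0$, $f\le1$ and $f\wedge(1-f)=0$ hold $m$-a.e., so $f=\chi_{\{f=1\}}$ and $f\in S$. (Only scalar integrability of the $\varphi_i\circ f$ is used, via domination by $|f|$, so this direction needs no completeness of $X$.)

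For the converse, suppose $m$ is closed and take a $\tau(m)$-Cauchy net in $L^1(m)$. Since the simple functions are $\tau(m)$-dense in $L^1(m)$, a standard shadowing argument reduces matters to showing that every Cauchy net $\{s_\alpha\}$ of simple functions converges. From the semivariation bound $\sup_{E}p\bigl(\int_E(s_\alpha-s_\beta)\,dm\bigr)\le p(m)(s_\alpha-s_\beta)$ one gets that $\{\int_A s_\alpha\,dm\}_\alpha$ is Cauchy in $X$, uniformly in $A$; sequential completeness of $X$ then yields a set function $\nu(A):=\lim_\alpha\int_A s_\alpha\,dm\in X$, which is an $m$-continuous vector measure. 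The crux is to manufacture an $m$-integrable density $f$ with $\nu(A)=\int_A f\,dm$ and to check $s_\alpha\to f$ in $\tau(m)$: this is exactly where closedness is indispensable, because completeness of the measure algebra $\widehat{\F}$ is what, through Segal's theorem \cite{se50} relating complete measure algebras to localizability, guarantees that the $p$-wise Radon--Nikodym data patch into a single global density even when $m$ admits no single $\sigma$-finite control measure.

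The main obstacle is concentrated in this last step and is twofold. First, sequential completeness of $X$ controls only sequences, so before extracting $\nu(A)$ one must replace the Cauchy net by an appropriate Cauchy sequence; this reduction — which is precisely what permits weakening quasicompleteness to sequential completeness, as in \cite{ri83,fnr} — rests on the countable character of the simple-function approximation within each $p(m)$-seminorm. Second, the construction of the density $f$ is genuinely delicate in the non-$\sigma$-finite regime, where one must glue the local derivatives coherently and invoke localizability (the order-completeness of $\widehat{\F}$) to ensure a global $f$ exists. I would expect the bookkeeping in this gluing, rather than any individual estimate, to be the hardest part of the argument.
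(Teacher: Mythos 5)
Your first direction is sound and self-contained: identifying $\widehat{\F}$ with the set $S$ of (classes of) characteristic functions via $\widehat{A}\mapsto\chi_A$ is exactly compatible with the semimetrics $d_p(\widehat{A},\widehat{B})=p(m)(\chi_A-\chi_B)$ used in the paper, and your Lipschitz-truncation argument (the $\varphi_i$ vanishing on $\{0,1\}$, combined with $p(m)(\varphi_i\circ f)\le p(m)(f-\chi_{A_\alpha})\to0$) correctly shows that $S$ is $\tau(m)$-closed, hence complete whenever $L^1(m)$ is. Be aware, though, that the paper does not prove Proposition \ref{pro1} at all: it imports it from \cite[Theorem IV.4.1]{kk75} as sharpened by \cite[Proposition 1]{ri83} and \cite[Proposition 3]{fnr}, so there is no internal argument to compare against, and the burden of your attempt falls entirely on the converse.

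That converse --- closedness of $m$ implies $\tau(m)$-completeness of $L^1(m)$ --- is the substantive half, and what you give is a plan rather than a proof. Two steps that you yourself flag are genuinely missing, and they are precisely where the published proofs do their work. First, for fixed $A$ the family $\{\int_A s_\alpha\,dm\}_\alpha$ is a Cauchy \emph{net} in $X$; sequential completeness gives nothing until it has been replaced by a sequence, and your proposed reduction ``within each $p(m)$-seminorm'' does not obviously produce a single countable cofinal subnet when $\P$ and the polars $U_p^\circ$ are uncountable --- this reduction is the actual content of \cite{ri83} and \cite{fnr}, not a routine preliminary. Second, the existence of a density $f$ with $\nu(A)=\int_A f\,dm$ and $s_\alpha\to f$ in $\tau(m)$ is asserted to follow from localizability via Segal's theorem \cite{se50}, but no gluing argument is given; moreover, the equivalence between closedness of $m$ and the existence of a localizable measure $\mu$ with respect to which $m$ is absolutely continuous is itself the deep result \cite[Theorem IV.7.3]{kk75} quoted in Remark 3.1 of the paper, so invoking it makes your argument at least as heavy as the proposition being proved and in effect reduces to citing the same sources the authors cite. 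As written, the implication from closedness to completeness has not been established.
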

\noindent
For other characterizations of closed vector measures, see \cite{or95,or99,ri84,ri90}. 

It follows immediately from Proposition \ref{pro1} that a vector measure is closed if it has a control measure. A lcHs $X$ has the \textit{Bartle--\hspace{0pt}Dunford--\hspace{0pt}Schwartz (BDS) property} if every $X$-\hspace{0pt}valued vector measure has a control measure. As demonstrated in \citet[Example 2.1]{ks14}, a lcHs $X$ has the BDS property for each of the following cases: (i) $X$ is metrizable; (ii) $X$ is Suslin; (iii) $X^*$ is weakly$^*$ separable. 

\begin{ex}
A vector measure $m:\F\to X$ is \textit{countably determined} if there exists a sequence $\{ x_n^* \}$ in $X^*$ such that a set $A\in \F$ is $m$-\hspace{0pt}null if it is $\langle x_n^*,m \rangle$-\hspace{0pt}null for each $n=1,2,\dots$. A countably determined vector measure $m$ in a lcHs $X$ has a control measure. Indeed, its control measure is given by
$$
\mu(A)=\sum_{n=1}^\infty \frac{|\langle x_n^*,m \rangle|(A)}{2^n(1+|\langle x_n^*,m \rangle|(\Omega))},\quad A\in \F.
$$
\end{ex}

\begin{rem}
At a first glance, the closedness of vector measures seems innocuous, but it demands a lot because a vector measure $m:\F\to X$ is closed if and only if there exists a localizable measure $\mu$ such that $m$ is $\mu$-\hspace{0pt}continuous (see \cite[Theorem IV.7.3]{kk75} and \cite[Lemma 11 and Corollary 13]{kl77}). Indeed, it is ``nothing" but absolute continuity in disguise! Without $\sigma$-\hspace{0pt}finiteness, the localizable measures nevertheless would not play a significant role as in the control measures of a vector measure because of the lack of the duality between  $L^1(\mu)$ and $L^\infty(\mu)$. 
\end{rem}

\subsection{Separability of $L^1(m)$}
The following observation due to \citet[Proposition 1A]{ri92} (see also \cite[Theorems 4.6.2 and 4.6.3]{pa08}) plays a crucial role to develop the notion of saturation for vector measures in lcHs. 

\begin{pro}
\label{pro2}
If $X$ is a sequentially complete lcHs, then $L^1(m)$ is separable if and only if $\widehat{\F}$ is separable. 
\end{pro}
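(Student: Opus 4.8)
The plan is to exploit the embedding $\widehat{A}\mapsto\chi_A$ of $\widehat{\F}$ into $L^1(m)$, which is a homeomorphism onto its range because $d_p(\widehat A,\widehat B)=p(m)(\chi_A-\chi_B)$ for every $p\in\P$; in particular the relative $\tau(m)$-topology on $\widehat{\F}$ is exactly the one generated by the semimetrics $d_p$. Separability will be transferred in each direction by producing an \emph{explicit} countable dense set, so that the non-metrizability of $\tau(m)$ (when $\P$ is uncountable) never intervenes: density in a seminorm-generated topology is tested one finite family of seminorms at a time, against a single fixed countable candidate set, and the cardinality of $\P$ is irrelevant to that candidate set.

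For the implication that separability of $L^1(m)$ forces separability of $\widehat{\F}$ I would use a rounding argument. Fix a countable $\tau(m)$-dense set $\{f_n\}\subset L^1(m)$, choose measurable representatives, and put $A_n:=\{\omega\mid f_n(\omega)\ge 1/2\}\in\F$. The pointwise inequality $|\chi_A-\chi_{A_n}|\le 2|\chi_A-f_n|$ holds for every $A\in\F$ (verified on the four cases according to membership in $A$ and $A_n$), and since each seminorm $p(m)(g)=\sup_{x^*\in U_p^\circ}\int|g|\,d|\langle x^*,m\rangle|$ is monotone in $|g|$, this yields $d_p(\widehat A,\widehat{A_n})=p(m)(\chi_A-\chi_{A_n})\le 2\,p(m)(\chi_A-f_n)$ for all $p$. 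Given $A$, a finite $F\subset\P$ and $\varepsilon>0$, density of $\{f_n\}$ supplies an index $n$ with $p(m)(\chi_A-f_n)<\varepsilon/2$ for every $p\in F$, whence $d_p(\widehat A,\widehat{A_n})<\varepsilon$ on $F$. Thus $\{\widehat{A_n}\}$ is dense in $\widehat{\F}$. Note that this direction uses neither completeness nor any bound on $|\P|$.

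For the converse I would start from a countable dense set $\{\widehat{A_n}\}\subset\widehat{\F}$ and form the countable family $\mathcal S$ of rational simple functions $\sum_{i=1}^{k} q_i\chi_{A_{n_i}}$ with $q_i\in\mathbb Q$. To see that $\mathcal S$ is $\tau(m)$-dense I would invoke the classical density of simple functions in $L^1(m)$ (\cite{kk75}). Given $g\in L^1(m)$, a finite $F\subset\P$ and $\varepsilon>0$: first approximate $g$ by a simple $s=\sum_{i=1}^{k} c_i\chi_{E_i}$ within $\varepsilon/3$ in each $p\in F$; then replace each $c_i$ by a rational $q_i$, controlling $\sum_i|c_i-q_i|\,\|m\|_p(E_i)\le\big(\max_{p\in F}\|m\|_p(\Omega)\big)\sum_i|c_i-q_i|$ by the boundedness of the $p$-semivariation; finally replace each $E_i$ by some $A_{n_i}$ for which $\|m\|_p(E_i\triangle A_{n_i})$ is small on $F$, using $p(m)(q_i\chi_{E_i}-q_i\chi_{A_{n_i}})=|q_i|\,\|m\|_p(E_i\triangle A_{n_i})$. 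Summing the three estimates gives $p(m)(g-\sum_i q_i\chi_{A_{n_i}})<\varepsilon$ for all $p\in F$, so $\mathcal S$ is dense and $L^1(m)$ is separable.

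The routine parts are the four-case check of the rounding inequality and the three triangle-type estimates. The one genuine input is the density of simple functions in $L^1(m)$, which may fail for incomplete $X$; this is the single place where the hypothesis that $X$ is sequentially complete enters, and it is the step I would be most careful to cite correctly. By contrast, the potential worry that a subspace of a separable but non-metrizable space need not itself be separable is entirely sidestepped here, precisely because in both directions we exhibit a concrete countable dense set rather than appealing to heredity of separability.
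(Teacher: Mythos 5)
Your argument is correct, but it cannot be compared line-by-line with the paper's, because the paper gives no proof of Proposition \ref{pro2} at all: it simply cites Ricker's Proposition 1A and Panchapagesan's Theorems 4.6.2--4.6.3. What you have written is a self-contained reconstruction of essentially the standard argument behind that citation. Both directions are sound. The rounding step $A_n=\{f_n\ge 1/2\}$ with the pointwise bound $|\chi_A-\chi_{A_n}|\le 2|\chi_A-f_n|$ checks out in all four cases, and you are right that monotonicity of each $p(m)$ in $|g|$ then transfers the estimate to $d_p$; you are also right that this direction needs no completeness. In the converse direction the three-term triangle estimate is fine, $p(m)(\chi_E-\chi_A)=\| m\|_p(E\triangle A)$ agrees with the paper's own formula for $d_p$, and the boundedness of the $p$-semivariation justifies the rational-coefficient step. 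You correctly isolate the one genuine external input --- density of the $\F$-simple functions in $L^1(m)$, which is where sequential completeness of $X$ enters (this is \cite[Theorem II.4.2]{kk75}, or Theorem 4.1.9 of \cite{pa08}) --- and your point that density in a seminorm-generated topology is verified against finitely many seminorms at a time, so that non-metrizability of $\tau(m)$ is harmless, is exactly the right way to dispose of the only real trap in the statement. Two small housekeeping remarks: the sets $A_n$ depend on the chosen representatives of the classes $f_n$, but any choice yields the same elements $\widehat{A_n}$ of $\widehat{\F}$, so this is immaterial; and one should note that $\chi_A\in L^1(m)$ for every $A\in\F$ (with $p(m)(\chi_A)=\| m\|_p(A)<\infty$), so the quantity $p(m)(\chi_A-f_n)$ is well defined. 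Neither affects the validity of the proof.
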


Indeed, for a scalar measure $\mu$, the non-\hspace{0pt}separability of $L^1(\mu)$ is a defining property for the saturation of the measure space $(\Omega,\F,\mu)$ (see \cite[331O and 365X(p)]{fr12}, \cite[Corollary 4.5]{hk84}, and \cite[Fact 2.5]{ks09}). 

\begin{thm}
\label{thm2}
Let $X$ be a sequentially complete lcHs and $m:\F\to X$ be a vector measure. Then the following conditions are equivalent:
\begin{enumerate}[\rm(i)] 
\item $(\Omega,\F,m)$ is saturated;
\item $\widehat{\F_E}$ is non-\hspace{0pt}separable for every $m$-\hspace{0pt}nonnull $E\in \F$; 
\item $L^1_E(m)$ is non-\hspace{0pt}separable for every $m$-\hspace{0pt}nonnull $E\in \F$.
\end{enumerate} 
\end{thm}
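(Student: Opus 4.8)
The plan is to split the three-way equivalence into two largely independent pieces: the equivalence (ii)$\Leftrightarrow$(iii) will follow almost formally from Proposition~\ref{pro2} applied to a restricted measure, whereas the equivalence (i)$\Leftrightarrow$(ii) will rest on a single measure-algebraic lemma identifying metric separability of $\widehat{\F_E}$ with countability of its Maharam type. Throughout I fix an $m$-nonnull $E\in\F$ and write $m_E$ for the restriction of $m$ to the trace $\sigma$-algebra $\F_E$.

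For (ii)$\Leftrightarrow$(iii) I would first note that $m_E\colon\F_E\to X$ is again an $X$-valued vector measure, that its vector measure algebra is precisely $\widehat{\F_E}$, and that the map $f\mapsto f\chi_E$ identifies $L^1(m_E)$ with $L^1_E(m)$ under a correspondence carrying each seminorm $p(m)$ to the one it induces (because $\|m_E\|_p$ is just $\|m\|_p$ evaluated on subsets of $E$). As $X$ is sequentially complete, Proposition~\ref{pro2} applies to $m_E$ and gives that $L^1(m_E)$ is separable iff $\widehat{\F_E}$ is separable; taking contrapositives and quantifying over all $m$-nonnull $E$ yields (ii)$\Leftrightarrow$(iii) at once.

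The substance lies in (i)$\Leftrightarrow$(ii). By the characterization recorded in Section~2 (that $\kappa(\widehat{\F_E})$ is countable iff $\F_E$ is $m$-essentially countably generated), saturation says exactly that no $\widehat{\F_E}$ is the measure algebra of a countably generated $\sigma$-algebra, so it suffices to prove the lemma that $\widehat{\F_E}$ is separable in the relative $\tau(m)$-topology iff $\F_E$ is $m$-essentially countably generated. For the ``if'' direction I would take a countable algebra $\A_0$ generating a witnessing sub-$\sigma$-algebra $\G$ with $\widehat\G=\widehat{\F_E}$ and show $\widehat{\A_0}$ is dense; the crux is to approximate a given $\widehat B$ \emph{simultaneously} in finitely many semimetrics $d_{p_1},\dots,d_{p_k}$. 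Here I would invoke the existence of $p_i$-control measures $\mu_{p_i}$ together with the uniform absolute continuity of each semivariation $\|m\|_{p_i}$ with respect to $\mu_{p_i}$: choosing $\delta>0$ so that $\mu_{p_i}(\cdot)<\delta$ forces $\|m\|_{p_i}(\cdot)<\varepsilon$, and applying the classical approximation theorem for the single finite measure $\mu_{p_1}+\cdots+\mu_{p_k}$, I obtain one $A\in\A_0$ with $d_{p_i}(\widehat A,\widehat B)<\varepsilon$ for every $i$. For the ``only if'' direction, a countable $\tau(m)$-dense family $\{\widehat{A_n}\}$ lies inside $\widehat\G$ for $\G:=\sigma(A_1,A_2,\dots)$, and the same approximation shows $\widehat{\A_0}$ is dense in $\widehat\G$; granting the closedness of $\widehat\G$ established below, this forces $\widehat\G=\overline{\{\widehat{A_n}\}}=\widehat{\F_E}$.

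I expect the genuine obstacle to be the \emph{non-metrizability} of the $\tau(m)$-topology when $\P$ is uncountable, which is precisely why separability cannot be checked one semimetric at a time: density in each $d_p$ separately is strictly weaker than density in the joint topology. The device that overcomes this is the reduction, via control measures and the uniform $\mu_p$-continuity of $\|m\|_p$, of any finite subfamily of semimetrics to a single finite scalar measure, thereby importing the scalar approximation and closure theory (as in \cite{fr12}) one finite subfamily at a time. A secondary point to verify with care is that a Boolean $\sigma$-subalgebra such as $\widehat\G$ is $\tau(m)$-closed; this holds because along an upward-directed set with a supremum the semivariation is continuous, so the metric closure of $\widehat{\A_0}$ coincides with the order-closed subalgebra it generates, and no completeness of $\widehat{\F_E}$ beyond the assumed sequential completeness of $X$ is needed.
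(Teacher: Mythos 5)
Your proposal is correct in substance but takes a genuinely different route from the paper on the hard equivalence. For (ii)$\Leftrightarrow$(iii) you and the paper both simply invoke Proposition~\ref{pro2} (applied to the restricted measure), so there is nothing to compare there. For (i)$\Leftrightarrow$(ii) the paper argues through (iii): the direction (iii)$\Rightarrow$(i) is handled by lifting a countable completely generating set of $\widehat{\F_E}$ to an $m$-essentially countably generated $\G$ and then citing Ricker's separability criterion \cite{ri92} wholesale, while the direction (i)$\Rightarrow$(iii) is a direct cardinality argument that picks an $m$-nonnull $A\in\F_E\setminus\G$ and claims $A\cap B=\emptyset$ for every $B$ in the countable family $\G$ --- a step that is not justified as written. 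You instead prove the single self-contained lemma that $\widehat{\F_E}$ is $\tau(m)$-separable iff $\F_E$ is $m$-essentially countably generated, reducing each finite subfamily of semimetrics $d_{p_1},\dots,d_{p_k}$ to the one scalar measure $\mu_{p_1}+\cdots+\mu_{p_k}$ via $p$-control measures and the classical approximation theorem; this buys an elementary replacement for the citation to \cite{ri92} in one direction and, in the other, sidesteps the paper's problematic disjointness step entirely. The price is that your argument leans on two facts you should make explicit: the uniform $\mu_p$-continuity of $\|m\|_p$ (which does follow from the construction of $p$-control measures in \cite{kk75}, not merely from the null-set equivalence in the definition), and the $\tau(m)$-closedness of $\widehat{\G}$ for a sub-$\sigma$-algebra $\G$, which in the non-metrizable setting cannot be checked one semimetric at a time and requires exactly the order-closure/upward-directed-net device the paper deploys in Lemma~\ref{lem7}; since that lemma (and the paper's standing identification of countable Maharam type with $m$-essential countable generation) is available to you, the gap is one of exposition rather than of substance.
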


\begin{proof}
(ii) $\Leftrightarrow$ (iii): See Proposition \ref{pro2}. 

(iii) $\Rightarrow$ (i): If the Maharam type of $\widehat{\F_E}$ is countable for some $m$-\hspace{0pt}nonnull $E\in \F$, then there is a countable subset $\widehat{\U}$ of $\widehat{\F_E}$ that completely generates $\widehat{\F_E}$. By virtue of the axiom of choice, there is a choice function $\varphi: \widehat{\F_E}\to \F_E$ such that $\varphi(\widehat{A})\in \widehat{A}$ for every $\widehat{A}\in \widehat{\F_E}$. Let $\G$ be the subalgebra of $\F_E$ completely generated by $\varphi(\widehat{\U})$. By construction, $\G$ is $m$-\hspace{0pt}essentially countably generated satisfying $\widehat{\G}=\widehat{\F_E}$, and hence, $L^1_E(m)$ is separable (see \cite[Proposition 2]{ri92}). 

(i) $\Rightarrow$ (iii): If the Maharam type of $\widehat{\F_E}$ is uncountable for every $m$-\hspace{0pt}nonnull $E\in \F$, then the cardinality of $\widehat{\F_E}$ is uncountable. Suppose, to the contrary, that $L^1_E(m)$ is separable for some $m$-\hspace{0pt}nonnull $E\in \F$. By Proposition \ref{pro2}, $\widehat{\F_E}$ is separable, so there exists a countable subset $\G$ of $\F_E$ such that for every $A\in \F_E$, every $\varepsilon_1,\dots,\varepsilon_k>0$ and every $p_1,\dots,p_k\in \P$ there exists $B\in \G$ satisfying $\| m \|_{p_i}(A\triangle B)<\varepsilon_i$ for each $i=1,\dots,k$. On the other hand, since $\widehat{\F_E}\setminus \widehat{\G}$ is uncountable, there exists an $m$-\hspace{0pt}nonnull set $A\in \F_E\setminus \G$ with $\widehat{A}\in \widehat{\F_E}\setminus \widehat{\G}$. Since $\P$ is a separating family of seminorms, we can take a seminorm $p\in \P$ with $p(m(A))>0$. Take any $B\in \G$. Then $A\cap B=\emptyset$, and hence, $\| m \|_p(A\triangle B)=\| m \|_p(A\cup B)\ge \| m \|_p(A)\ge p(m(A))$ for every $B\in \G$, a contradiction. Therefore, $L^1_E(m)$ is non-\hspace{0pt}separable. 
\end{proof}

\section{Lyapunov Convexity Theorem in LcHs}
\subsection{Lyapunov Measures and Lyapunov Operators}
Following \cite{ks13,ks15}, we characterize the Lyapunov convexity theorem in terms of the integration operator. 

\begin{df}
A vector measure $m:\F\to X$ is a \textit{Lyapunov measure} if for every $E\in \F$ the set $m(\F_E)$ is weakly compact and convex in $X$.
\end{df}

\begin{df}
The integration operator $T_m:L^\infty(m)\to X$ is said to be:
\begin{enumerate}[(i)]
\item a \textit{nonatomic operator} if for every $m$-\hspace{0pt}nonnull $E\in \F$ and every neighborhood $U$ of the origin in $X$ there exists $f\in L^\infty_E(m)\setminus \{ 0 \}$ with signed values $\{ -1,0,1 \}$ such that $T_mf\in U$;
\item a \textit{Lyapunov operator} of $m$ if for every $m$-\hspace{0pt}nonnull $E\in \F$ the restriction $T_m:L^\infty_E(m)\to X$ is not injective.
\end{enumerate}
\end{df}

\begin{thm}
\label{thm3}
Let $X$ be a sequentially complete lcHs and $m:\F\to X$ be a vector measure. Then $m$ is nonatomic if and only if $T_m:L^\infty(m)\to X$ is a nonatomic operator.
\end{thm}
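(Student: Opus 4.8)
The plan is to prove the two implications separately. The direction asserting that a nonatomic operator forces $m$ to be nonatomic is routine and I would establish it by contraposition; the substance lies in showing that nonatomicity of $m$ produces a nonatomic operator.

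For the routine direction, suppose $m$ has an atom $A$, so $m(A)\neq 0$ and $\widehat A$ is an atom of $\widehat\F$, and take $E=A$ as the test set. Any $f\in L^\infty_A(m)$ with values in $\{-1,0,1\}$ has the form $\chi_{A_+}-\chi_{A_-}$ with $A_\pm\subseteq A$ disjoint; since $\widehat A$ is an atom, each of $\widehat{A_+},\widehat{A_-}$ equals $\widehat\emptyset$ or $\widehat A$ and they cannot both equal $\widehat A$, so the only nonzero classes give $T_mf=m(A_+)-m(A_-)=\pm m(A)$. As $X$ is Hausdorff and $m(A)\neq 0$, I would pick $p\in\P$ with $p(m(A))>0$ and set $U=\{x: p(x)<p(m(A))\}$; then $\pm m(A)\notin U$, so no admissible nonzero $f$ on $E=A$ lands in $U$, and $T_m$ fails to be a nonatomic operator.

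For the converse I would reduce to the claim that, if $m$ is nonatomic, then for every $m$-nonnull $E$, every $p_1,\dots,p_k\in\P$ and every $\varepsilon>0$ there is an $m$-nonnull $A\subseteq E$ with $\|m\|_{p_i}(A)<\varepsilon$ for all $i$. Granting this, $f=\chi_A$ is a nonzero element of $L^\infty_E(m)$ with values in $\{0,1\}\subseteq\{-1,0,1\}$ satisfying $p_i(T_mf)=p_i(m(A))\le\|m\|_{p_i}(A)<\varepsilon$, hence $T_mf$ lies in the basic neighborhood $\{x:p_i(x)<\varepsilon,\ i=1,\dots,k\}$, and since such neighborhoods form a base at the origin, $T_m$ is a nonatomic operator. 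To prove the claim I would fix finite $p_i$-control measures $\mu_{p_i}$ (available by the cited Kluv\'anek--Knowles result) chosen so that each $\|m\|_{p_i}$ is $\varepsilon$-$\delta$ continuous with respect to $\mu_{p_i}$, put $\mu=\sum_{i=1}^k\mu_{p_i}$, and decompose $E$ into its $\mu$-nonatomic part $E_0$ and the countably many $\mu$-atoms $A_j$.

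The argument then splits into two cases, and the second carries the real difficulty. If $\mu(E_0)>0$, nonatomicity of $\mu$ on $E_0$ yields $A\subseteq E_0$ with $0<\mu(A)<\delta$; the $\varepsilon$-$\delta$ continuity forces all $\|m\|_{p_i}(A)<\varepsilon$, while $\mu(A)>0$ guarantees $A$ is $m$-nonnull. The delicate case is when the mass sits on a $\mu$-atom $A:=A_j$ with $\mu(A)>0$: such an $A$ is $m$-nonnull but, crucially, need \emph{not} be an $m$-atom, and on it the semivariation cannot be shrunk by subdivision, so the first-case idea collapses. Here I would invoke nonatomicity of $m$ directly: since $\widehat A$ is not an atom of $\widehat\F$, $A$ splits into two $m$-nonnull pieces, and since $A$ is a $\mu$-atom it is, for each $i$, either $p_i$-null or a $\mu_{p_i}$-atom, so one of the two pieces is $p_i$-null. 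Iterating this splitting through $p_1,\dots,p_k$, at each stage retaining the $p_i$-null half (which stays $m$-nonnull precisely because $m$ is nonatomic), produces an $m$-nonnull $B\subseteq A$ with $\|m\|_{p_i}(B)=0$ for every $i$, settling the claim. The main obstacle throughout is exactly this mismatch between atoms of $m$ and atoms of the scalar control measures $\mu_{p_i}$: without a single control measure a $\mu_{p_i}$-atom can be genuinely $m$-divisible, and the entire iteration serves to convert that $m$-divisibility into a set on which every chosen seminorm vanishes.
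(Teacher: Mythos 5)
Your proof is correct, and on the easy direction it matches the paper's argument in substance: the paper phrases it as a contradiction and appeals to the fact that measurable $\{-1,0,1\}$-valued functions on an $m$-atom are equivalent to $\pm\chi_A$, which is exactly your computation $T_mf=\pm m(A)$ followed by the choice of a seminorm separating $m(A)$ from $\bold{0}$. On the substantive direction, however, you take a genuinely different route. The paper argues by contraposition: if $T_m$ fails to be a nonatomic operator on some $E$, it shrinks the witnessing neighborhood to a single $U_p$ via Minkowski functionals, deduces $\| m \|_p(A)>1$ for every $m$-nonnull $A\in\F_E$, and then uses a single $p$-control measure $\mu_p$ and $\varepsilon$-$\delta$ absolute continuity to conclude that $E$ must be an atom; the decisive step --- producing an $m$-nonnull subset of $E$ with small $\mu_p$-measure when $E$ is not an $m$-atom --- is left quite terse there, and it is precisely the difficulty you isolate, namely that a $\mu_p$-atom need not be an $m$-atom. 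You instead prove the implication directly: you fix the finitely many seminorms defining a basic neighborhood, sum the corresponding control measures into one finite measure $\mu$, and split $E$ into the $\mu$-nonatomic part (where smallness of the semivariations follows at once from $\varepsilon$-$\delta$ continuity) and the $\mu$-atoms (where the iterated $m$-splitting, retaining at each stage the half that is null for the current $\mu_{p_i}$, yields an $m$-nonnull set on which every chosen semivariation vanishes --- the iteration is legitimate because a subset of a $\mu$-atom is again $\mu$-null or a $\mu$-atom). What your approach buys is a self-contained, fully explicit argument that confronts the atom mismatch head-on and handles several seminorms simultaneously without the reduction to one $U_p$; what the paper's contrapositive buys is brevity, at the price of leaving implicit that under its hypotheses every $m$-nonnull subset of $E$ has $\mu_p$-measure bounded below, so that $\widehat{\F_E}$ is forced to contain an atom. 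The only loose end in your write-up is the trivial configuration $\mu(E)=0$, covered by neither of your two cases; there $A=E$ itself already satisfies $\| m \|_{p_i}(E)=0<\varepsilon$ for all $i$, so nothing is lost.
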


\begin{proof}
Suppose that $T_m$ is a nonatomic operator. If $m$ has an atom $E\in \F$, then $m(E)\ne \bold{0}$. Thus, for every neighborhood $U$ of $\bold{0}$ there exists $f\in L^\infty_E(m)\setminus \{ 0 \}$ with signed values $\{ -1,0,1 \}$ such that $T_mf\in U\cap (-U)$. Since measurable functions are constant on atoms of $m$, either $f=\chi_E$ or $f=-\chi_E$. We thus obtain $m(E)\in \{ \pm T_mf \}\subset U\cap(-U)$ for every neighborhood $U$ of $\bold{0}$, and hence, $m(E)=\bold{0}$, a contradiction.

Conversely, suppose that $T_m$ is not a nonatomic operator. Then there exists $E\in \F$ with $m(E)\ne \bold{0}$ and a convex, balanced, absorbing neighborhood $U$ of $\bold{0}$ such that $T_mf\not\in U$ for every $f\in L^\infty_E(m)\setminus \{ 0 \}$ with signed values $\{ -1,0,1 \}$. (Such a neighborhood can be taken as $U\cap (-U)$ as above). Since there exists $p\in \P$ such that $U=U_p$ (see \cite[Theorems 1.34 and 1.35]{ru73}), for every $m$-\hspace{0pt}nonnull $A\in \F_E$, we have $m(A)=T_m\chi_A\not\in U_p$. Thus, $\| m \|_p(A)\ge p(m(A))>1$ for every $m$-\hspace{0pt}nonnull $A\in \F_E$. If $E$ is not an atom of $m$, then there exists $A\in \F_E$ such that $m(A)\ne m(E)$ and $m(A)\ne \bold{0}$. Take any $p$-\hspace{0pt}control measure $\mu_p$ of $m$. By the $\mu_p$-\hspace{0pt}continuity of $\| m \|_p$, we have $\mu_p(A)>0$. Hence, there exists $\delta>0$ such that for every $B\in \F$ with $\mu_p(A\cap B)<\delta$, we have $\| m \|_p(A\cap B)<1$, a contradiction. Therefore, $E$ is an atom of $m$.
\end{proof}

The range of a Lyapunov measure $m$ is weakly compact and convex in $X$ in view of $m(\F)=m(\F_\Omega)$. If $m$ has an atom $E\in \F$, then evidently, $m(\F_E)$ is not convex in $X$. Therefore, every Lyapunov measure is nonatomic. As the next result demonstrates, the nonatomicity of vector measures is reinforced as well by the notion of Lyapunov operators (see \cite[Theorem 3.2]{ks13}). 

\begin{thm}
\label{thm4}
Let $X$ be a sequentially complete lcHs and $m:\F\to X$ be a vector measure. If $T_m:L^\infty(m)\to X$ is a Lyapunov operator, then it is a nonatomic operator.
\end{thm}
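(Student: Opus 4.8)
The plan is to route the argument through the nonatomicity of $m$ together with the already-established Theorem~\ref{thm3}, rather than constructing the required $\{-1,0,1\}$-\hspace{0pt}valued functions by hand. Concretely, I would prove the contrapositive of the intermediate implication ``$T_m$ Lyapunov operator $\Rightarrow$ $m$ nonatomic'': if $m$ possesses an atom $E\in\F$, then the restriction $T_m:L^\infty_E(m)\to X$ is injective, which directly violates the defining property of a Lyapunov operator on the $m$-\hspace{0pt}nonnull set $E$.

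To carry this out, suppose $E$ is an atom of $m$, so that $m(E)\ne\bold{0}$ and $\widehat{E}$ is an atom of $\widehat{\F}$. The key observation---already exploited in the proof of Theorem~\ref{thm3}---is that any measurable function is constant $m$-\hspace{0pt}a.e.\ on an atom. Hence every element of $L^\infty_E(m)$ coincides, modulo $m$-\hspace{0pt}null sets, with $c\chi_E$ for a unique scalar $c\in\R$, so that $L^\infty_E(m)$ is spanned by $\chi_E$ and is therefore one-\hspace{0pt}dimensional (note $\chi_E\ne 0$ in $L^\infty_E(m)$ since $E$ is $m$-\hspace{0pt}nonnull). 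Under the integration operator this gives $T_m(c\chi_E)=c\,m(E)$, and since $m(E)\ne\bold{0}$ the linear map $c\mapsto c\,m(E)$ has trivial kernel. Thus $T_m:L^\infty_E(m)\to X$ is injective on the $m$-\hspace{0pt}nonnull atom $E$, contradicting the hypothesis that $T_m$ is a Lyapunov operator. It follows that $m$ admits no atom, i.e.\ $m$ is nonatomic, and then Theorem~\ref{thm3} immediately yields that $T_m:L^\infty(m)\to X$ is a nonatomic operator.

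The main obstacle is conceptual rather than technical: one must resist the temptation to produce directly, for each neighborhood $U$ of the origin, a $\{-1,0,1\}$-\hspace{0pt}valued function in the kernel of $T_m$ meeting $U$, and instead recognize that the entire content of the nonatomic-\hspace{0pt}operator conclusion is already packaged in Theorem~\ref{thm3}. Once the reduction to nonatomicity is made, the only genuine step is verifying that an atom forces the restricted integration operator to be injective, which rests entirely on the constancy of measurable functions on atoms. The sole point worth checking carefully is that ``not injective'' in the definition of a Lyapunov operator is read as ``having nontrivial kernel'' (valid since the restriction is linear), so that the one-\hspace{0pt}dimensionality of $L^\infty_E(m)$ on an atom does indeed deliver the required contradiction.
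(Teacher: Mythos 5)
Your argument is correct. Note first that the paper itself supplies no proof of Theorem~\ref{thm4}; it simply cites Theorem~3.2 of Khan--Sagara (2013), so there is no in-text proof to compare against line by line. Your reduction is nevertheless exactly in the spirit of the surrounding discussion: you show that a Lyapunov operator forces $m$ to be nonatomic, and then invoke the converse half of Theorem~\ref{thm3} (nonatomic measure $\Rightarrow$ nonatomic operator), which the paper has already established using $p$-control measures. The only substantive step you supply yourself --- that on an atom $E$ every element of $L^\infty_E(m)$ is $c\chi_E$ modulo $m$-null sets, so that $T_m(c\chi_E)=c\,m(E)$ with $m(E)\ne\bold{0}$ makes the restriction injective --- rests on the constancy of measurable functions on atoms, a fact the paper itself uses without further comment in the proof of Theorem~\ref{thm3}; it holds here because countable unions of $m$-null sets are $m$-null, so the usual argument with the level sets $\{f\le r\}\cap E$ goes through without a control measure. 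Your reading of ``not injective'' as ``nontrivial kernel'' is legitimate since the restriction is linear. What your route buys is economy: the delicate part of producing $\{-1,0,1\}$-valued functions with small integral is entirely outsourced to Theorem~\ref{thm3}, whose proof (via $p$-control measures and absolute continuity of $\|m\|_p$) the paper has already paid for; the cost is that your proof is not independent of that machinery, whereas a direct construction would be. As written, the proposal is a complete and correct proof within the logical order of the paper.
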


The following result is due to \cite[Theorem V.1.1]{kk75}. 

\begin{pro}
\label{pro3}
Let $X$ be a quasicomplete lcHs and $m:\F\to X$ be a closed vector measure. Then $m$ is a Lyapunov measure if and only if $T_m:L^\infty(m)\to X$ is a Lyapunov operator. The range of a Lyapunov measure $m$ is given by
$$
m(\F)=\{ m(f) \in X\mid 0\le f\le 1,\,f\in L^\infty(m) \}.
$$
\end{pro}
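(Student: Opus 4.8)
The plan is to prove the two implications separately and to read the range formula off the substantive one. The direction ``Lyapunov measure $\Rightarrow$ Lyapunov operator'' I would dispatch directly from convexity, with no topological machinery. Suppose $m$ is a Lyapunov measure but that $T_m$ is injective on $L^\infty_E(m)$ for some $m$-nonnull $E$. Since $E$ is $m$-nonnull there is $A\in\F_E$ with $m(A)\ne 0$, and convexity of $m(\F_E)$ gives $\tfrac12 m(A)=\tfrac12 m(A)+\tfrac12 m(\emptyset)\in m(\F_E)$, say $\tfrac12 m(A)=m(C)$ with $C\in\F_E$. Then $T_m(\tfrac12\chi_A)=\tfrac12 m(A)=m(C)=T_m(\chi_C)$, so injectivity would force $\tfrac12\chi_A=\chi_C$ in $L^\infty_E(m)$; but the left-hand side takes the value $\tfrac12$ on the $m$-nonnull set $A$ while the right-hand side is $\{0,1\}$-valued, a contradiction. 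Hence $T_m$ is a Lyapunov operator.

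For the reverse implication I would run a Lindenstrauss-style extreme-point argument, and here is where the hypotheses on $X$ and $m$ enter. Since $X$ is quasicomplete and $m$ is closed, I would invoke the representation behind the Remark (\cite[Theorem IV.7.3]{kk75}) to fix a localizable measure $\mu$ with $m\ll\mu$, sharing the null sets of $m$, so that $L^\infty(m)$ is identified with $L^\infty(\mu)=(L^1(\mu))^*$. In this guise the order interval $K_E=\{f\in L^\infty(m)\mid 0\le f\le 1,\ f=f\chi_E\}$ is bounded and weak$^*$-closed, hence weak$^*$-compact and convex by Banach--Alaoglu. Next I would check that each $x^*\circ T_m$ is weak$^*$-continuous: writing $\langle x^*,T_mf\rangle=\int f\,d\langle x^*,m\rangle$ and using $\langle x^*,m\rangle\ll\mu$ to produce a Radon--Nikodym density $g_{x^*}\in L^1(\mu)$, one gets $\langle x^*,T_mf\rangle=\int f g_{x^*}\,d\mu=\langle f,g_{x^*}\rangle$. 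Thus $T_m\colon(K_E,w^*)\to(X,w)$ is continuous and $m(K_E)=T_m(K_E)$ is weakly compact and convex.

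It then remains to show $m(\F_E)=m(K_E)$. The inclusion $m(\F_E)\subseteq m(K_E)$ is immediate since $\chi_A\in K_E$ for $A\in\F_E$. For the reverse inclusion I would fix $x\in m(K_E)$ and pass to the fibre $F=K_E\cap T_m^{-1}(x)$, which is nonempty, convex and weak$^*$-compact, so by Krein--Milman it has an extreme point $g_0$. I claim $g_0$ is a characteristic function. If not, the set $\{0<g_0<1\}\cap E$ is $m$-nonnull, so $E'=\{\varepsilon\le g_0\le 1-\varepsilon\}\cap E$ is $m$-nonnull for some $\varepsilon>0$; because $T_m$ is a Lyapunov operator, $T_m$ is not injective on $L^\infty_{E'}(m)$, which yields $h\ne 0$ supported in $E'$ with $m(h)=0$. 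After rescaling so that $|h|\le\varepsilon$, both $g_0\pm h$ lie in $F$ and are distinct, contradicting extremality of $g_0$. Hence $g_0=\chi_B$ with $B=\{g_0=1\}\cap E\in\F_E$, so $x=m(g_0)=m(B)\in m(\F_E)$. This proves $m(\F_E)=m(K_E)$ is weakly compact and convex for every $E$, i.e.\ $m$ is a Lyapunov measure; taking $E=\Omega$ delivers the range formula $m(\F)=m(K_\Omega)=\{m(f)\mid 0\le f\le 1,\ f\in L^\infty(m)\}$.

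The main obstacle is the forward direction, and it is twofold. The genuinely delicate point is the topological framework: realizing $m(K_E)$ as a weakly compact convex set needs both weak$^*$-compactness of the order interval and weak$^*$-to-weak continuity of $T_m$, and it is precisely here that quasicompleteness and closedness are spent, via the localizable measure $\mu$ and the $L^1$--$L^\infty$ duality. Note that Lemma~\ref{lem1} only supplies weak-to-weak continuity on $L^1(m)$, which does not by itself control the weak$^*$ topology on the order interval, so this identification cannot be bypassed. The second, more combinatorial, difficulty is the perturbation step, where the Lyapunov operator hypothesis must be converted into a nonzero $h$ with $m(h)=0$ that is small enough to keep $g_0\pm h$ inside the order interval; localizing to a level set $\{\varepsilon\le g_0\le 1-\varepsilon\}$ and rescaling is the maneuver that makes the extreme-point contradiction go through.
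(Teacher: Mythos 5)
Your proof is essentially correct, but note that the paper itself gives no proof of this proposition: it is cited verbatim from Kluv\'anek--Knowles \cite[Theorem V.1.1]{kk75}, whose argument is the same Lindenstrauss-style extreme-point scheme you use (weak compactness of the order interval, pass to the fibre over a point of the range, show every extreme point of the fibre is a characteristic function by perturbing with a null-integral $h$ supported on a level set $\{\varepsilon\le g_0\le 1-\varepsilon\}$). Your elementary convexity argument for the forward implication is fine. Where you diverge is in how the compactness is obtained, and here your methodological claim is off: you assert that Lemma~\ref{lem1} ``does not by itself control the weak$^*$ topology on the order interval, so this identification cannot be bypassed,'' but it can be. The paper's Lemma~\ref{lem2} (which is exactly where closedness of $m$ and quasicompleteness of $X$ are spent) shows that a bounded, closed, convex subset of $L^\infty(m)$ --- in particular $K_E$ and the fibre $K_E\cap T_m^{-1}(x)$ --- is weakly compact in $L^1(m)$; combined with the weak-to-weak continuity of $T_m$ from Lemma~\ref{lem1}, this yields the weak compactness and convexity of $T_m(K_E)$ and supplies the extreme point via Krein--Milman in $(L^1(m),w)$, with no localizable measure and no weak$^*$ topology. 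This is precisely the route the paper itself takes in the proofs of Lemma~\ref{lem3} and Theorem~\ref{thm7}. Your detour through \cite[Theorem IV.7.3]{kk75} is workable but carries an extra obligation you gloss over: the identification $L^\infty(m)\cong L^\infty(\mu)=(L^1(\mu))^*$ needs the null ideals of $\mu$ and $m$ to \emph{coincide}, whereas the Remark as stated only gives $\mu$-null $\Rightarrow$ $m$-null (the stronger equivalence does follow from \cite{kl77}, but it must be invoked explicitly), and it also relies on the Radon--Nikod\'ym and $L^1$--$L^\infty$ duality theorems for non-$\sigma$-finite localizable measures. What your route buys is independence from Lemma~\ref{lem2}; what the intrinsic route buys is economy and uniformity with the rest of the paper.
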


\subsection{Saturation: A Sufficiency Theorem}
\begin{lem}
\label{lem2}
Let $X$ be a sequentially complete lcHs and $m:\F\to X$ be a closed vector measure. Then a bounded closed convex subset of $L^\infty(m)$ is weakly compact in $L^1(m)$. 
\end{lem}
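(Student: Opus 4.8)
The plan is to reduce the statement to its essential special case, the closed unit ball $B=\{f\in L^\infty(m):\|f\|_\infty\le 1\}$, and to prove that $B$ is weakly compact in $L^1(m)$; the general case then follows immediately. Indeed, after rescaling, any bounded convex set $C\subseteq L^\infty(m)$ lies in a multiple of $B$, and being convex and closed in $L^1(m)$ it is weakly closed there (a convex, $\tau(m)$-closed set is weakly closed); since a closed subset of a weakly compact set is weakly compact, it suffices to treat $B$. Observe first that $B$ itself is $\tau(m)$-closed: if $\|f_\alpha\|_\infty\le 1$ and $f_\alpha\to f$ in $\tau(m)$, then $f_\alpha\to f$ in $L^1(|\langle x^*,m \rangle|)$ for every $x^*\in X^*$, and the unit ball of each $L^\infty(|\langle x^*,m \rangle|)$ is norm-closed in the corresponding $L^1$, whence $|f|\le 1$ $m$-almost everywhere.

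The conceptual core is to realise $B$ as the unit ball of a dual Banach space and to invoke Banach--Alaoglu. Because $m$ is closed, there is a localizable measure $\mu$ with the same null ideal as $m$ (see \cite{kk75,kl77}); consequently $L^\infty(m)=L^\infty(\mu)$ with equal essential-supremum norms, and $B$ is exactly the closed unit ball of $L^\infty(\mu)$. Localizability is precisely the condition under which $(L^1(\mu))^*=L^\infty(\mu)$ (see \cite{se50}), so by Banach--Alaoglu $B$ is compact for the weak$^*$ topology $\sigma(L^\infty(\mu),L^1(\mu))$. It then remains only to compare this weak$^*$ topology on $B$ with the topology inherited from the weak topology of $L^1(m)$.

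For that comparison I would show that the inclusion of $B$ into $L^1(m)$ is continuous from $\sigma(L^\infty(\mu),L^1(\mu))$ to the weak topology of $L^1(m)$; being a continuous injection of a compact space into a Hausdorff space, it is then a homeomorphism onto a weakly compact image, which is the desired conclusion. Continuity amounts to checking that every $\varphi\in(L^1(m))^*$ is weak$^*$-continuous on $B$, i.e.\ is represented on $B$ by an element of $L^1(\mu)$. For the functionals $\varphi=\langle x^*,m \rangle$, which belong to $(L^1(m))^*$ by Lemma \ref{lem1}, this is immediate from the Radon--Nikodym derivative $d\langle x^*,m \rangle/d\mu\in L^1(\mu)$; the general case follows from the explicit description of $(L^1(m))^*$ in \cite{ok93}.

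The hard part is this last point: the dual $(L^1(m))^*$ is genuinely larger than the family $\{\langle x^*,m \rangle:x^*\in X^*\}$, so one must verify that the \emph{whole} dual, and not merely the functionals coming from $X^*$, is carried by $L^1(\mu)$. This is exactly where closedness of $m$ is indispensable---through the completeness of $L^1(m)$ (Proposition \ref{pro1}) and the attendant existence of the localizable $\mu$---and where the representation theorem of \cite{ok93} does the real work. A more hands-on alternative, avoiding the dual computation, is to fix a $p$-control measure $\mu_p$ for each $p\in\P$, to note that $B$ maps into the unit ball of each $L^\infty(\mu_p)\subseteq L^1(\mu_p)$, which is uniformly integrable and hence relatively weakly compact by the Dunford--Pettis theorem, and then to use completeness of $L^1(m)$ to show that weak limits taken coordinatewise across the family $\{\mu_p\}$ remain representable by a single $m$-integrable function lying in $B$; the same dual-space difficulty resurfaces there as the need to upgrade coordinatewise weak convergence to weak convergence in $L^1(m)$.
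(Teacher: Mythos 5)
Your overall architecture is different from the paper's, and the step you yourself flag as ``the hard part'' is a genuine gap, not a deferrable detail. Two things are unverified. First, the cited closedness criterion only produces a localizable $\mu$ with respect to which $m$ is \emph{absolutely continuous} ($\mu$-null $\Rightarrow$ $m$-null); it does not assert that $\mu$ and $m$ have the same null ideal, so the identification $L^\infty(m)=L^\infty(\mu)$ with equal norms, on which your Banach--Alaoglu step rests, is not justified as stated. Second, and more seriously, the continuity of the inclusion $(B,\sigma(L^\infty(\mu),L^1(\mu)))\hookrightarrow (L^1(m),\text{weak})$ requires that \emph{every} $\varphi\in(L^1(m))^*$ act on $B$ as $f\mapsto\int fg_\varphi\,d\mu$ with $g_\varphi\in L^1(\mu)$. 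Even for the functionals $\langle x^*,m\rangle$ this invokes a Radon--Nikod\'ym theorem over a localizable, generally non-$\sigma$-finite $\mu$, where absolute continuity of a finite measure does not by itself guarantee a density (one needs true continuity); and for the rest of the dual you appeal to \cite{ok93} without extracting from it the statement you actually need. A typical element of $(L^1(m))^*$ is only dominated by finitely many seminorms of the form $f\mapsto\sup_{x^*\in U_p^\circ}\int|f|\,d|\langle x^*,m\rangle|$, and passing from such a supremum to integration against a single $L^1(\mu)$-function is precisely the unproven content.

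The paper's proof avoids this entirely. It uses the $\tau(m)$-closedness of $K$ together with Proposition \ref{pro1} to conclude that $K$ is $\tau(m)$-complete, projects $K$ into each Banach space $L^1(|\langle x^*,m\rangle|)$ via the canonical quotient maps $\pi_{p,x^*}$, gets weak compactness of each image $\pi_{p,x^*}(K)$ from uniform integrability (Dunford--Pettis, \cite[Corollary IV.8.11]{ds58}) and Eberlein--\u{S}mulian, and then invokes \cite[Theorem I.1.1]{kk75}: a complete subset of a locally convex space whose canonical images in the associated Banach quotients are all weakly compact is itself weakly compact. That theorem is exactly the ``upgrade from coordinatewise weak compactness to weak compactness in $L^1(m)$'' that you correctly identify as the obstruction in your hands-on alternative with $p$-control measures; the closedness of $m$ enters only through the completeness hypothesis of that theorem. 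If you want to salvage your route, you would need to prove the $L^1(\mu)$-representation of $(L^1(m))^*$ on $B$ in detail; as written, the proposal replaces the paper's one-line appeal to a completeness-based compactness criterion with an unestablished duality statement.
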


\begin{proof}
Let $K$ be a bounded, closed, convex subset of $L^\infty(m)$. By Proposition \ref{pro1}, $K$ is $\tau(m)$-\hspace{0pt}complete in $L^1(m)$. Since the $\tau(m)$-\hspace{0pt}topology of $L^1(m)$ is generated by the family of seminorms $\{q_{p,x^*}\mid p\in \P,\,x^*\in U_p^\circ \}$ defined by $q_{p,x^*}(f)=\int|f|d|\langle x^*,m \rangle|$, the quotient space $L^1(m)/q_{p,x^*}^{-1}(0)$ is a vector subspace of  $L^1(|\langle x^*,m \rangle|)$ endowed with the $L^1(|\langle x^*,m \rangle|)$-\hspace{0pt}norm. Let $\pi_{p,x^*}:L^1(m)\to L^1(m)/q_{p,x^*}^{-1}(0)$ be the natural projection. Then $\pi_{p,x^*}(K)$ is a closed subset of $L^1(|\langle x^*,m \rangle|)$. Since the boundedness of $K$ in $L^\infty(m)$ implies that $\pi_{p,x^*}(K)$ is uniformly integrable in the sense that
$$
\lim_{|\langle x^*,m \rangle|(A)\to 0}\sup_{f\in \pi_{p,x^*}(K)}\int_A|f|d|\langle x^*,m \rangle|=0
$$
by the Dunford--\hspace{0pt}Pettis criterion (see \cite[Corollary IV.8.11]{ds58}), $\pi_{p,x^*}(K)$ is weakly sequentially compact, and hence, weakly compact in $L^1(|\langle x^*,m \rangle|)$ for every $x^*\in U_p^\circ$ and $p\in \P$ in view of the Eberlein--\hspace{0pt}\u{S}mulian theorem (see \cite[Theorem V.6.1]{ds58}). By \cite[Theorem I.1.1]{kk75}, the weak compactness of $\pi_{p,x^*}(K)$ in $L^1(|\langle x^*,m \rangle|)$ for every $p\in \P$ and $x^*\in X^*$ implies the weak compactness of $K$ in $L^1(m)$. 
\end{proof}

The \textit{density} of a topological space $S$, denoted by $\mathrm{dens}\,S$, is the smallest cardinal of any dense subset of $S$. The density of a lcHs $X$ is equal to the \textit{topological dimension} of $X$, i.e., the smallest cardinal of any set whose linear span is dense in $X$ whenever $\mathrm{dens}\,X$ is infinite. 

\begin{lem}
\label{lem7}
$\kappa(\widehat{\F_E})\le \mathrm{dens}\,\widehat{\F_E}$ for every $m$-\hspace{0pt}nonnull $E\in \F$. 
\end{lem}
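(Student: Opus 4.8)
The plan is to show that every $\tau(m)$-dense subset $D$ of $\widehat{\F_E}$ completely generates $\widehat{\F_E}$; since the Maharam type is the least cardinality of a completely generating set, taking $D$ of minimal cardinality $\mathrm{dens}\,\widehat{\F_E}$ then yields the inequality. Passing from $D$ to the Boolean subalgebra $\G$ it generates changes neither the cardinality (as $\mathrm{dens}\,\widehat{\F_E}$ may be assumed infinite) nor the order-closed subalgebra generated, and $\G\supseteq D$ remains dense. Writing $\U$ for the order-closed subalgebra completely generated by $\G$, it therefore suffices to prove $\U=\widehat{\F_E}$. Because $\|m\|_p$ is monotone and subadditive, each $d_p$ makes the Boolean operations $\tau(m)$-continuous (complementation is a $d_p$-isometry, and $d_p(\widehat A\vee\widehat{A'},\widehat B\vee\widehat{B'})\le d_p(\widehat A,\widehat B)+d_p(\widehat{A'},\widehat{B'})$), so the $\tau(m)$-closure of a subalgebra is again a subalgebra; in particular $\overline{\G}=\widehat{\F_E}$. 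Hence it is enough to show that $\U$, being order-closed, is $\tau(m)$-closed, for then $\U\supseteq\overline{\G}=\widehat{\F_E}$.

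The heart of the matter is thus the implication \emph{order-closed $\Rightarrow$ $\tau(m)$-closed}. To handle it I would fix, for each $p\in\P$, a $p$-control measure $\mu_p$ and pass to the metric Boolean algebra $\M_p$ obtained from $\widehat{\F_E}$ by identifying elements whose symmetric difference is $\|m\|_p$-null. By the $\mu_p$-continuity of $\|m\|_p$ exploited in the proof of Theorem~\ref{thm3}, $\M_p$ is the algebra of a finite measure space and the quotient map $\pi_p:\widehat{\F_E}\to\M_p$ is continuous for $d_p$ and the $\mu_p$-metric. Given $a\in\overline{\U}$, continuity gives $\pi_p(a)\in\overline{\pi_p(\U)}$ in the \emph{metrizable} algebra $\M_p$; invoking the classical scalar facts that a topologically closed subalgebra of a finite measure algebra is order-closed and that convergence in measure admits an almost-everywhere convergent subsequence, one produces a sequence $u_n\in\U$ for which $\pi_p(a)=\bigvee_n\bigwedge_{k\ge n}\pi_p(u_k)=\bigwedge_n\bigvee_{k\ge n}\pi_p(u_k)$. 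Forming the corresponding countable suprema and infima in the $\sigma$-complete algebra $\widehat{\F_E}$ yields elements $b_p\le a\le c_p$ of $\U$ with $\|m\|_p(c_p\setminus b_p)=0$, i.e.\ $\pi_p(a)\in\pi_p(\U)$.

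Finally I would patch across $p$. Since a set is $m$-null exactly when it is $\|m\|_p$-null for every $p\in\P$, the relations $\pi_p(a)\in\pi_p(\U)$ say that $a$ agrees with a member of $\U$ in each seminorm; assembling these into a single element of $\U$ equal to $a$ is where completeness enters. Under the closedness hypothesis of this subsection $\widehat{\F_E}$ is a localizable, hence Dedekind complete, measure algebra (cf.\ the Remark following Proposition~\ref{pro1}), so $a_*:=\sup\{u\in\U:u\le a\}$ and $a^*:=\inf\{u\in\U:u\ge a\}$ exist in $\U$ and satisfy $a_*\le a\le a^*$; the per-seminorm conclusion then forces $\|m\|_p(a^*\setminus a_*)=0$ for every $p$, so that $a^*\setminus a_*$ is $m$-null and $a=a_*\in\U$.

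The main obstacle is precisely this interplay between the order and topological structures in the absence of a single control measure: $\tau(m)$ is generated by a possibly uncountable family of semimetrics and need not be metrizable, so membership in $\overline{\U}$ supplies only a net, and no single sequence can control all $p$ simultaneously. The device above---testing one seminorm at a time in the metrizable quotients $\M_p$, where the scalar measure-algebra theory applies, and then recombining through the completeness of $\widehat{\F_E}$---is the step that must be executed with care, in particular verifying that the recombined suprema and infima genuinely lie in the (arbitrary) order-closed $\U$ rather than in a merely sequential order-closure.
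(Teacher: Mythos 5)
Your reduction of the lemma to the implication ``order-closed $\Rightarrow$ $\tau(m)$-closed'' for the subalgebra $\U$ completely generated by a dense family is exactly the paper's strategy, and the preliminary observations (the Boolean operations are $d_p$-continuous, so the closure of a subalgebra is a subalgebra, whence it suffices that $\U$ be $\tau(m)$-closed) are fine. Where you depart from the paper is in the proof of that implication: the paper argues directly with a net inside $\widehat{\F_E}$, whereas you factor through the metrizable quotients $\M_p$ attached to the $p$-control measures and then try to reassemble across $p\in\P$. The reassembly is where your argument breaks down.

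The concrete gap is the sandwich claim $b_p\le a\le c_p$. From a sequence $u_n\in\U$ with $\pi_p(u_n)\to\pi_p(a)$ you may indeed form $b_p=\bigvee_n\bigwedge_{k\ge n}u_k$ and $c_p=\bigwedge_n\bigvee_{k\ge n}u_k$ in the Dedekind $\sigma$-complete algebra $\widehat{\F_E}$; these lie in $\U$ and satisfy $\pi_p(b_p)=\pi_p(c_p)=\pi_p(a)$, i.e.\ they agree with $a$ modulo the ideal of $\|m\|_p$-null elements. But nothing forces $b_p\le a\le c_p$: each $u_k\setminus a$ is only $\|m\|_p$-small, not zero, so $\bigwedge_{k\ge n}u_k$ may exceed $a$ by a $p$-null but $m$-nonnull element. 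Your patching step uses the order relations essentially --- you need $b_p\le a_*$ and $a^*\le c_p$ to deduce $\|m\|_p(a^*\setminus a_*)=0$ --- and the weaker conclusion you actually establish, namely that $a$ agrees with some $v_p\in\U$ modulo $\|m\|_p$-null elements, gives no control over $a_*=\sup\{u\in\U\mid u\le a\}$, since $v_p\wedge a$ need not belong to $\U$ and the set $\{u\in\U\mid u\le a\}$ can be trivial even when such $v_p$ exist for every $p$. The root of the difficulty is that your argument only ever exploits the one-seminorm-at-a-time information $\pi_p(a)\in\mathrm{cl}\,\pi_p(\U)$, which is strictly weaker than $a\in\mathrm{cl}\,\U$ in $\tau(m)$ (simultaneous approximation in all seminorms); a correct proof must use the simultaneity, as the paper's net argument does. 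A lesser point: you also invoke Dedekind completeness of $\widehat{\F_E}$ via the closedness of $m$, a hypothesis that does not appear in the statement of the lemma (though it is harmless in the context where the lemma is applied).
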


\begin{proof}
Let $E\in \F$ be an $m$-\hspace{0pt}nonnull set and $\{ \widehat{A}_\alpha \}_{\alpha<\mathrm{dens}\,\widehat{\F_E}}$ be a family of elements in $\widehat{\F_E}$ such that its $\tau(m)$-\hspace{0pt}closure coincides with $\widehat{\F_E}$. Denote by $\widehat{\U}$ the subalgebra of $\widehat{\F_E}$ completely generated by $\{ \widehat{A}_\alpha \}_{\alpha<\mathrm{dens}\,\widehat{\F_E}}$. Since $\{ \widehat{A}_\alpha \}_{\alpha<\mathrm{dens}\,\widehat{\F_E}}$ is contained in $\widehat{\U}$, the $\tau(m)$-\hspace{0pt}closure $\mathrm{cl}\,\widehat{\U}$ of $\widehat{\U}$ coincides with $\widehat{\F_E}$. If we demonstrate that $\widehat{\U}=\widehat{\F_E}$, then we have $\kappa(\widehat{\F_E})\le \mathrm{dens}\,\widehat{\F_E}$. To this end, it suffices to show the $\tau(m)$-\hspace{0pt}closedness of $\widehat{\U}$. Let $\{ \widehat{A}^\nu \}$ be a net in $\widehat{\U}$ converging to $\widehat{A}$. Since 
$\mathrm{cl}\,\widehat{\U}=\widehat{\F_E}$, there exists $\widehat{B}^\nu\in \widehat{\U}$ such that $\widehat{B}^\nu\le \widehat{A}\wedge \widehat{A}^\nu\in \widehat{\F_E}$ for each $\nu$ and the net $\{ \widehat{B}^\nu \}$ converges to $\widehat{A}$. Extracting a subnet from $\{ \widehat{B}^\nu \}$ (which we do not relabel), one may assume that $\{ \widehat{B}^\nu \}$ is upward directed in $\widehat{\U}$ with $\lim_\nu\widehat{B}^\nu=\sup_\nu \widehat{B}^\nu=\widehat{A}$. Since $\widehat{\U}$ is order closed, we have $\widehat{A}\in \widehat{\U}$. 
\end{proof}

\begin{lem}
\label{lem3}
If $X$ is a sequentially complete lcHs and $m:\F\to X$ is a closed vector measure such that $\mathrm{dens}\,X<\kappa(\widehat{\F_E})$ for every $m$-\hspace{0pt}nonnull $E\in \F$, then $T_m:L^\infty(m)\to X$ is a Lyapunov operator.
\end{lem}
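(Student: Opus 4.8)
The plan is to argue by contradiction. If $T_m$ fails to be a Lyapunov operator, then there is an $m$-nonnull $E\in\F$ on which the restriction $T_m:L^\infty_E(m)\to X$ is injective, and I would aim to show that this injectivity forces $\mathrm{dens}\,\widehat{\F_E}\le\mathrm{dens}\,X$. Since Lemma \ref{lem7} gives $\kappa(\widehat{\F_E})\le\mathrm{dens}\,\widehat{\F_E}$, this would contradict the standing hypothesis $\mathrm{dens}\,X<\kappa(\widehat{\F_E})$, proving the lemma. So everything reduces to turning injectivity into the density estimate $\mathrm{dens}\,\widehat{\F_E}\le\mathrm{dens}\,X$.

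First I would build a weakly compact model of the domain. Let $K=\{f\in L^\infty_E(m)\mid 0\le f\le\chi_E\}$ be the order interval, a bounded, closed, convex subset of $L^\infty(m)$; by Lemma \ref{lem2} it is weakly compact in $L^1(m)$. By Lemma \ref{lem1}, $T_m$ is continuous for the weak topologies of $L^1(m)$ and $X$, so $T_m|_K$ is a weakly continuous injection (injectivity inherited from $T_m|_{L^\infty_E(m)}$) of the weakly compact set $K$ into $X$. Since a continuous bijection from a compact space onto a Hausdorff space is a homeomorphism and $(X,\text{weak})$ is Hausdorff, $T_m|_K:(K,\text{weak})\to(T_m(K),\text{weak})$ is a homeomorphism, whence $\mathrm{dens}(K,\text{weak})=\mathrm{dens}(T_m(K),\text{weak})$.

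Next I would convert these weak densities into the densities appearing in the statement. Both $K$ and $T_m(K)$ are convex, and Mazur's theorem identifies the weak closure of a convex set with its $\tau(m)$- (resp.\ $\tau$-) closure; a routine argument with rational convex combinations then shows that for a convex set the weak density equals the original density, so $\mathrm{dens}(K,\tau(m))=\mathrm{dens}(T_m(K),\tau)$. For the lower bound I would use that the characteristic functions embed $\widehat{\F_E}$ into $K$, that the simple functions are $\tau(m)$-dense in $L^1_E(m)$ with $\overline{\mathrm{span}}\,K=L^1_E(m)$, and that the thresholding map $f\mapsto\{f\ge\tfrac12\}$ is a Lipschitz retraction of $L^1_E(m)$ onto $\widehat{\F_E}$; together these yield $\mathrm{dens}\,\widehat{\F_E}\le\mathrm{dens}\,L^1_E(m)\le\mathrm{dens}(K,\tau(m))$.

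The hard part will be the remaining, opposite estimate $\mathrm{dens}(T_m(K),\tau)\le\mathrm{dens}\,X$: density is \emph{not} monotone under passage to subsets of a general topological space, so the inclusion $T_m(K)\subseteq X$ alone is insufficient, and indeed a convex weakly compact subset can have larger density than the ambient space. Here I would exploit the convexity of $T_m(K)$ together with the identification $T_m(K)=\overline{\mathrm{co}}\,(m(\F_E))$ and, crucially, a separating subfamily $\Gamma\subseteq X^*$ of cardinality $\mathrm{dens}\,X$: embedding $(T_m(K),\text{weak})$ into the product $\R^{\Gamma}$ realizes it as a compact subspace whose weight, and hence density, is at most $|\Gamma|=\mathrm{dens}\,X$ (weight being monotone under subspaces). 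Assembling the chain
$$
\mathrm{dens}\,\widehat{\F_E}\le\mathrm{dens}(K,\tau(m))=\mathrm{dens}(T_m(K),\tau)\le\mathrm{dens}\,X<\kappa(\widehat{\F_E})\le\mathrm{dens}\,\widehat{\F_E}
$$
then gives the contradiction. I expect the delicate point to be precisely this control of the density character of the weakly compact convex image inside $X$ by $\mathrm{dens}\,X$, i.e.\ the step where the meaning of $\mathrm{dens}\,X$ as the topological dimension of $X$, and its relation to the size of a separating family in $X^*$, must be invoked with care.
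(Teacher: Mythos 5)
Your proposal follows the same route as the paper's proof: argue by contradiction from injectivity of $T_m$ on $L^\infty_E(m)$, use Lemma \ref{lem2} to make the order interval weakly compact in $L^1(m)$ (the paper uses the unit ball $\B_E$ of $L^\infty_E(m)$; the difference is immaterial), use Lemma \ref{lem1} together with the compact--Hausdorff principle to see that $T_m$ restricted to this set is a weak--weak homeomorphism onto its image, transfer a dense set of cardinality $\mathrm{dens}\,X$ back to the domain, and conclude with Lemma \ref{lem7}. Two of the points you single out are handled more casually in the paper: the passage from weak density to $\tau(m)$-density (the paper invokes only the convexity of $\B_E$, whereas a weakly dense set of cardinality $\mathrm{dens}\,X$ need not be $\tau(m)$-dense until one closes it under rational convex combinations, as you do), and the inequality $\mathrm{dens}\,\widehat{\F_E}\le\mathrm{dens}\,\B_E$ (the paper reads it off from the inclusion $\widehat{\F_E}\subset\B_E$ alone; your thresholding map $f\mapsto\{f\ge\tfrac12\}$ is not globally Lipschitz --- consider $f\equiv\tfrac12$ and $g\equiv\tfrac12-\varepsilon$ --- but the pointwise bound $\chi_{A\triangle\{f\ge 1/2\}}\le 2|\chi_A-f|$ makes it continuous at every characteristic function, which is all that is needed to push a dense subset of $K$ down to a dense subset of $\widehat{\F_E}$).

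The genuine gap is in the step you yourself flag as delicate. You propose to bound $\mathrm{dens}(T_m(K),\mathrm{weak})$ by embedding $T_m(K)$ into $\R^\Gamma$ for a separating family $\Gamma\subset X^*$ with $|\Gamma|=\mathrm{dens}\,X$. Such a family need not exist: for $X=\R^{\mathfrak{c}}$ with the product topology, $X$ is separable while $X^*=\bigoplus_{\mathfrak{c}}\R$, so every countable $\Gamma\subset X^*$ depends on only countably many coordinates and separates neither the points of $X$ nor, in general, the points of a given weakly compact convex subset (e.g.\ $[0,1]^{\mathfrak{c}}$, which is compact, convex, and not separated by countably many functionals, although it happens to be separable). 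The cardinality of a separating subfamily of $X^*$ is governed by the weight of $X$, not by its density, and the inequality $d\le w$ for compacta therefore runs in the wrong direction for your purposes. To be fair, the paper's own proof simply writes down ``a dense subset $\{x_\alpha\}_{\alpha<\mathrm{dens}\,X}$ of $T_m(\B_E)$'' without justification, i.e.\ it takes for granted precisely the monotonicity of density that you correctly identify as the crux; your diagnosis of where the difficulty lies is accurate, but the repair you sketch does not close it.
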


\begin{proof}
Suppose to the contrary that $T_m$ is not a Lyapunov operator. Then there exists an $m$-\hspace{0pt}nonnull set $E\in \F$ such that the restriction $T_m:L^\infty_E(m)\to X$ is an injection. Let $\B$ denote the closed unit ball in $L^\infty(m)$ and set $\B_E=\B\cap L^\infty_E(m)$. Since $L^\infty_E(m)$ is a closed vector subspace of $L^1(m)$, by Lemma \ref{lem2}, $\B_E$ is a weakly compact subset of $L^1(m)$. Since the integration operator $T_m:L^1(m)\to X$ is continuous for the weak topologies of $L^1(m)$ and $X$ by Lemma \ref{lem1}, the restriction $T_m$ to $\B_E$ is a homeomorphism between $\B_E$ and $T_m(\B_E)$. It follows from the convexity and weak closedness of $T_m(\B_E)$ that $T_m(\B_E)$ is also closed in the strong topology of $X$. Let $\{ x_\alpha \}_{\alpha<\mathrm{dens}\,X}$ be a dense subset of $T_m(\B_E)$ and define $f_\alpha=T^{-1}_mx_\alpha$. Then $\{ f_\alpha \}_{\alpha<\mathrm{dens}\,X}$ is a weakly dense subset of $\B_E$ in $L^1(m)$. Since $\B_E$ is convex, it is $\tau(m)$-\hspace{0pt}closed, and hence, $\{ f_\alpha \}_{\alpha<\mathrm{dens}\,X}$ is also a $\tau(m)$-\hspace{0pt}dense subset of $\B_E$. Since  $\widehat{\F_E}\subset \B_E$, by Lemma \ref{lem7}, we have $\kappa(\widehat{\F_E})\le \mathrm{dens}\,\widehat{\F_E}\le \mathrm{dens}\,\B_E\le \mathrm{dens}\,X$, a contradiction to the hypothesis. 
\end{proof}

An immediate consequence of Proposition \ref{pro3} and Lemma \ref{lem3} is the following version of the Lyapunov theorem, which is a further generalization of \cite{gp13,ks13,ks15}. 

\begin{thm}
\label{thm5}
If $X$ is a quasicomplete lcHs and $m:\F\to X$ is a closed vector measure such that $\mathrm{dens}\,X<\kappa(\widehat{\F_E})$ for every $m$-\hspace{0pt}nonnull $E\in \F$, then $m$ is a Lyapunov measure with its range given by
$$
m(\F)=\{ m(f)\in X\mid 0\le f\le 1,\,f\in L^\infty(m) \}.
$$
\end{thm}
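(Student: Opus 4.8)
The plan is simply to chain together the two results that immediately precede this statement, since the hypotheses of the theorem have been engineered to feed directly into them. I expect essentially no genuine obstacle here beyond checking that the completeness assumptions are mutually compatible; the indicative remark in the text that this is ``an immediate consequence of Proposition \ref{pro3} and Lemma \ref{lem3}'' confirms this reading.

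First I would record that a quasicomplete lcHs is in particular sequentially complete. Hence $X$ satisfies the completeness hypothesis of Lemma \ref{lem3}. Since $m$ is moreover a closed vector measure satisfying $\mathrm{dens}\,X<\kappa(\widehat{\F_E})$ for every $m$-nonnull $E\in\F$, Lemma \ref{lem3} applies verbatim and yields that the integration operator $T_m:L^\infty(m)\to X$ is a Lyapunov operator. This is the substantive input, but it is already proved earlier, so at this level it is simply invoked.

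Next, since $X$ is assumed quasicomplete and $m$ is closed, the full strength of Proposition \ref{pro3} is available. Applying its forward implication to the conclusion just obtained, the fact that $T_m$ is a Lyapunov operator gives that $m$ is itself a Lyapunov measure; and the range formula furnished by Proposition \ref{pro3}, namely $m(\F)=\{\,m(f)\in X\mid 0\le f\le 1,\ f\in L^\infty(m)\,\}$, transfers without change to give the asserted description of the range. This completes the deduction.

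The one point deserving attention is the slight mismatch between the completeness hypotheses of the two ingredients: Lemma \ref{lem3} is stated only for sequentially complete $X$, whereas Proposition \ref{pro3} requires quasicompleteness. Because the theorem postulates the stronger property of quasicompleteness, both hypotheses are simultaneously met, and so the two results splice together cleanly with no additional argument required. I therefore anticipate that the proof is a two-line citation rather than a construction.
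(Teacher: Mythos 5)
Your proposal matches the paper exactly: the authors state that the theorem is ``an immediate consequence of Proposition \ref{pro3} and Lemma \ref{lem3},'' and your chaining of Lemma \ref{lem3} (to get that $T_m$ is a Lyapunov operator) into Proposition \ref{pro3} (to get the Lyapunov property and the range formula) is precisely that deduction. Your observation that quasicompleteness implies sequential completeness, so the hypotheses of both ingredients are simultaneously satisfied, is the only point needing verification and you have handled it correctly.
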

\noindent
In particular, if $X$ is separable and $m$ is saturated, then the density hypothesis of Theorem \ref{thm5} is automatic because $\mathrm{dens}\,X=\aleph_0<\aleph_1\le \kappa(\widehat{\F_E})$ for every $m$-\hspace{0pt}nonnull $E\in \F$.  

\begin{rem}
The \textit{algebraic dimension} of a lcHs $X$ is the cardinality of a Hamel basis in $X$. We denote by $\dim X$ the algebraic dimension of $X$. Consider the dimensionality condition:
$$
\dim L^\infty_E(m)>\dim X \quad \text{for every $m$-\hspace{0pt}nonnull $E\in \F$}.
$$
This is obviously a sufficient condition for $T_m$ to be a Lyapunov operator. As evident from Lemma \ref{lem3}, the dimensionality condition is sharpened to the density condition:
$$
\mathrm{dens}\,L^\infty_E(m)>\mathrm{dens}\,X \quad \text{for every $m$-\hspace{0pt}nonnull $E\in \F$}.
$$
For the discussion of the dimensionality and density conditions, see \cite[Remark 3.2]{ks13}. 
\end{rem}

\subsection{Saturation: A Necessity Theorem}
Let $Y$ be a lcHs. Denote by $\mathit{ca}(\F,m,Y)$ the space of $Y$-\hspace{0pt}valued closed vector measures on $\F$ which are absolutely continuous with respect to a vector measure $m:\F\to X$. 

\begin{thm}
\label{thm8}
Let $X$ be a quasicomplete separable lcHs, $Y$ be a quasicomplete, separable lcHs with a vector subspace that is isomorphic to an infinite-\hspace{0pt}dimensional Banach space, and $m:\F\to X$ be a closed vector measure. Then the following conditions are equivalent:
\begin{enumerate}[\rm (i)]
\item $(\Omega,\F,m)$ is saturated;
\item $T_n:L^\infty(n)\to Y$ is a Lyapunov operator for every $n\in \mathit{ca}(\F,m,Y)$;
\item Every vector measure in $\mathit{ca}(\F,m,Y)$ is saturated;
\item Every vector measure in $\mathit{ca}(\F,m,Y)$ is a Lyapunov measure.
\end{enumerate}
\end{thm}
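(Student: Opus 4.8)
The plan is to run the cycle (i) $\Rightarrow$ (iii) $\Rightarrow$ (ii) $\Leftrightarrow$ (iv) $\Rightarrow$ (i). The equivalence (ii) $\Leftrightarrow$ (iv) is free: $Y$ is quasicomplete and every $n\in\mathit{ca}(\F,m,Y)$ is by definition a closed vector measure, so Proposition~\ref{pro3} applies to each such $n$ and gives ``$n$ is a Lyapunov measure $\iff$ $T_n$ is a Lyapunov operator''; quantifying over $n$ yields the equivalence. Thus the real content is carried by the three remaining arrows, of which only the last is substantial.

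For (i) $\Rightarrow$ (iii), observe that any $n\in\mathit{ca}(\F,m,Y)$ is absolutely continuous with respect to the saturated measure $m$, so the criterion recorded at the close of Section~2.2 --- that a vector measure is saturated precisely when it is absolutely continuous with respect to some saturated vector measure --- immediately makes $n$ saturated. For (iii) $\Rightarrow$ (ii), fix $n\in\mathit{ca}(\F,m,Y)$. By (iii) the algebra of $n$ has $\kappa(\widehat{\F_E})$ uncountable on every $n$-nonnull $E$, while $Y$ separable gives $\mathrm{dens}\,Y=\aleph_0<\aleph_1\le\kappa(\widehat{\F_E})$; since $Y$ is sequentially complete and $n$ is closed, Lemma~\ref{lem3} (applied with $Y,n$ in place of $X,m$) shows that $T_n$ is a Lyapunov operator, and (ii) follows.

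The heart of the theorem is (iv) $\Rightarrow$ (i), which I would prove contrapositively: assuming $(\Omega,\F,m)$ is not saturated, I construct some $n\in\mathit{ca}(\F,m,Y)$ that is not a Lyapunov measure. Non-saturation together with Theorem~\ref{thm2} yields an $m$-nonnull $E$ with $\widehat{\F_E}$ separable, i.e.\ $\F_E$ essentially countably generated; extracting a countable determining family for $m|_{\F_E}$ (cf.\ the countably determined measures of Section~3.2, a step where the separability hypotheses enter) produces a finite scalar measure $\mu$, supported on $E$, with the same null sets as $m$ there. Using the hypothesis that $Y$ contains a subspace isomorphic to an infinite-dimensional Banach space $Z$, I pick a normalized basic sequence $\{e_k\}\subset Z$, a bounded sequence $\{\phi_k\}$ total in $L^2(\mu)$, and scalars $c_k>0$ with $\sum_k c_k\|\phi_k\|_\infty<\infty$, and set
\[
n(A)=\sum_{k}c_k\Big(\int_{A\cap E}\phi_k\,d\mu\Big)e_k .
\]
Absolute convergence in the Banach space $Z$ makes $n$ a countably additive $Z$-valued, hence $Y$-valued, measure with control measure $\mu$; by Proposition~\ref{pro1} it is closed, and since $n\ll\mu\ll m$ we have $n\in\mathit{ca}(\F,m,Y)$. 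For $g\in L^\infty_E(n)\subset L^2(\mu)$ one computes $T_ng=\sum_k c_k(\int g\phi_k\,d\mu)e_k$, so $T_ng=0$ forces $\int g\phi_k\,d\mu=0$ for all $k$, whence $g=0$ by totality of $\{\phi_k\}$. Thus $T_n$ is injective on the $n$-nonnull set $E$, so $n$ is not a Lyapunov operator and, by Proposition~\ref{pro3}, not a Lyapunov measure --- contradicting (iv).

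The main obstacle is precisely this construction, and the delicate points are where I would spend the effort: manufacturing the scalar control measure $\mu$ on the non-saturated piece (which rests on separability of the restricted algebra $\widehat{\F_E}$ rather than on any global control measure for $m$), guaranteeing that the defining series converges to a genuinely countably additive and closed measure, and verifying injectivity of $T_n$ on an $n$-nonnull set. A harmless case split streamlines matters: if $m$ carries an atom then $\dim L^2(\mu)<\infty$ is possible, but on a single atom $T_n$ is automatically injective once $n$ does not vanish there, so the conclusion follows directly and the Banach subspace is not even needed; the infinite-dimensional subspace $Z$ is required only when $m$ is nonatomic, in which case the separable piece $E$ is nonatomic and $L^2(\mu)$ is genuinely infinite-dimensional, enabling the series above.
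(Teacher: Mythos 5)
Your overall architecture coincides with the paper's: the easy arrows are obtained exactly as in the text ((i)\,$\Rightarrow$\,(iii) from the closing remark of Section~2.2, (iii)\,$\Rightarrow$\,(ii)/(iv) from Lemma~\ref{lem3} with $\mathrm{dens}\,Y=\aleph_0<\aleph_1\le\kappa(\widehat{\F_E})$, and the operator/measure equivalence from Proposition~\ref{pro3}), and the substantive implication (iv)\,$\Rightarrow$\,(i) is proved contrapositively by manufacturing a non-Lyapunov member of $\mathit{ca}(\F,m,Y)$ supported on a non-saturated piece. The paper outsources that construction to \cite[Lemma 4.1]{ks13}; your series $n(A)=\sum_k c_k(\int_{A\cap E}\phi_k\,d\mu)e_k$ with a basic sequence in the Banach subspace and a total sequence in $L^2(\mu)$ is in essence the content of that cited lemma, and the verification of countable additivity, closedness and injectivity of $T_n$ is fine once $\mu$ is in hand.

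The genuine gap is the step that produces $\mu$. You assert that separability lets you extract a countable determining family for $m|_{\F_E}$ and hence a finite measure with \emph{the same null sets} as $m$ on $E$. Neither claim is justified: separability of $X$ does not yield weak$^*$ separability of $X^*$ (the paper lists these as distinct sufficient conditions for the BDS property in Section~3.2), and a closed, nonatomic, non-saturated vector measure into a separable quasicomplete lcHs need not admit a control measure on the exhibited piece $E$ --- for instance $m:\mathcal{B}([0,1]^2)\to\R^{[0,1]}$ with components $\lambda\otimes\delta_c$ has $m$-essentially countably generated domain but supports uncountably many pairwise disjoint $m$-nonnull sets, so no finite measure can share its null ideal. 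Indeed, if a full control measure were available on $E$ the theorem would largely collapse to the ``with control measures'' setting of \cite{ks15}, which is precisely what this paper is avoiding. The repair is what the paper actually does: take a $p$-\hspace{0pt}control measure $\mu_p$ (which exists for \emph{every} continuous seminorm $p$, with no separability needed), choosing $p$ so that $\|m\|_p(E)>0$. This gives only the one-sided relation ($m$-null $\Rightarrow$ $\mu_p$-null), but that is all your construction requires: it yields $n\ll\mu_p\ll m$, hence $n\in\mathit{ca}(\F,m,Y)$, and $L^2(E,\F_E,\mu_p)$ is still separable because $\F_E$ is $m$-essentially countably generated and the $\mu_p$-null ideal contains the $m$-null ideal, so a countable total family $\{\phi_k\}$ exists. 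With that substitution (and a word on why $\mu_p$ is nonatomic on the relevant sub-$\sigma$-algebra), your argument goes through and matches the paper's.
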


\begin{proof}
(i) $\Rightarrow$ (iii): Trivial.

(iii) $\Rightarrow$ (iv): See Lemma \ref{lem3} and Theorem \ref{thm5}.

(iv) $\Rightarrow$ (i): If $m$ is not saturated, then there exists an $m$-\hspace{0pt}nonnull set $E\in \F$ such that $\tau(\widehat{\F_E})$ is a countable cardinal, where $(\widehat{\F_E},\hat{m})$ is the restriction of the vector measure algebra $(\widehat{\F},\hat{m})$ to $E$. This means that $\F_E$ is $m$-\hspace{0pt}essentially countably generated, and hence, there exists a countably generated sub $\sigma$-\hspace{0pt}algebra $\G$ of $\F_E$ such that $\widehat{\G}=\widehat{\F_E}$. Since the restriction of $m$ to $\F_E$ is an $X$-\hspace{0pt}valued Lyapunov measure on $\F_E$, it is nonatomic on $\F_E$ by Theorems \ref{thm3} and \ref{thm4}. Take any $p$-\hspace{0pt}control measure $\mu_p$ of $m$. It follows from $\widehat{\G}=\widehat{\F_E}$ that $m$ is nonatomic on $\G$, and hence, $\mu_p$ is also nonatomic on $\G$ because of the absolutely continuity of $\mu_p$ with respect to $m$. Therefore, $(E,\G,\mu_p)$ is a countably generated, nonatomic, finite measure space, which is not obviously saturated. Let $\tilde{Y}$ be a vector subspace of $Y$ that is isomorphic to an infinite-\hspace{0pt}dimensional Banach space. By \cite[Lemma 4.1]{ks13}, there exists a non-\hspace{0pt}Lyapunov measure $n_F\in \mathit{ca}(\G_F,\mu_p, \tilde{Y})$ for some $F\subset E$ with $\mu_p(F)>0$. Extend $n_F$ from $\G_F$ to $\F$ by $n(A)=n_F(A\cap F)$ for $A\in \F$. Indeed, $n$ is closed since it absolutely continuous with respect to $\mu_p$. Hence, $n\in \mathit{ca}(\F,m,Y)$ is not a Lyapunov measure.

(i) $\Leftrightarrow$ (ii): See Proposition \ref{pro3}.
\end{proof}

By taking $m$ as a scalar measure, we can recover obviously the necessity result explored in \cite{ks13,ks15}.

\section{Further Characterization of Saturation}
\subsection{The Bang-Bang Principle in LcHs}
Let $I$ be an arbitrary subset of the set $\mathbb{N}$ of natural numbers and $X_i$ be a lcHs for each $i\in I$. Denote by $\prod_{i\in I}X_i$ the product space of $X_i$ consisting of all mappings $I\ni i\mapsto x_i\in X_i$, endowed with the product topology (given by the pointwise convergence in $X_i$ for each $i\in I$). Let $m:\F\to \prod_{i\in I}X_i$ be a vector measure with a component measure $m_i:\F\to X_i$ for $i\in I$. Given a vector measure space $(\Omega,\F,m)$, two measurable functions $f:\Omega\to \R^I$ and $g:\Omega\to \R^I$ are regarded as equivalent if $f(\omega)=g(\omega)$ except on the $m$-\hspace{0pt}null set. 

The infinite-\hspace{0pt}dimensional control systems under scrutiny here are described by the lcHs $\prod_{i\in I}X_i$, the vector measure space $(\Omega,\F,m)$, and a multifunction $\Gamma:\Omega\twoheadrightarrow \R^I$. Denote by $\mathcal{S}^1_\Gamma$ the set of measurable selectors $f:\Omega\to \R^I$ from $\Gamma$ such that each component function $f_i:\Omega\to \R$ is $m_i$-\hspace{0pt}integrable. Thus, $\mathcal{S}^1_\Gamma$ is regarded as a subset of the product space $\prod_{i\in I}L^1(m_i)$. For each $f_i\in L^1(m_i)$, let $m_i(f_i)=\int f_i dm_i\in X_i$. Then the integral of $\Gamma$ with respect to $m$ is given by
$$
\int \Gamma dm=\{ (m_i(f_i))_{i\in I}\in X \mid f=(f_i)_{i\in I}\in \mathcal{S}_\Gamma^1 \}.
$$

\begin{df}
A vector measure $m:\F\to \prod_{i\in I}X_i$ satisfies the \textit{bang-\hspace{0pt}bang principle} for a multifunction $\Gamma:\Omega\twoheadrightarrow \R^I$ if $\int \Gamma dm=\int \mathrm{ex}\,\Gamma dm$, where $\mathrm{ex}\,\Gamma:\Omega\twoheadrightarrow \R^I$ is the multifunction given by the extreme points of $\Gamma(\omega)$ at each $\omega\in \Omega$. 
\end{df}

A multifunction $\Gamma:\Omega\twoheadrightarrow \R^I$ is \textit{bounded} if there exists a bounded set $K\subset \R^I$ such that $\Gamma(\omega)\subset K$ for every $\omega\in \Omega$. Denote by $\M_{\mathit{bfc}}(\Omega,\R^I)$ the set of graph measurable, bounded multifunctions with closed convex values from $\Omega$ to $\R^I$. 

The next result is a special case of \cite[Theorems IV.14 and IV.15]{cv77}. 

\begin{lem}
\label{lem5}
For every $\Gamma\in \M_{\mathit{bfc}}(\Omega,\R^I)$, the following conditions hold.
\begin{enumerate}[\rm(i)]
\item $\mathrm{ex}\,\mathcal{S}_{\Gamma}^1=\mathcal{S}_{\mathrm{ex}\,\Gamma}^1$. 
\item For every $f\in \mathcal{S}_{\Gamma}^1$ there exists a measurable function $g:\Omega\to \R^I$ such that $f\pm g\in \mathcal{S}_{\Gamma}^1$ and $g(\omega)\ne \bold{0}$ whenever $f(\omega)\not\in \mathrm{ex}\,\Gamma(\omega)$. 
\end{enumerate}
\end{lem}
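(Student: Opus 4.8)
The plan is to deduce both parts from the decomposable structure of $\mathcal{S}_\Gamma^1$ by means of a single measurable selection argument, and in fact (i) will follow almost formally once (ii) is available, so the real work lies in (ii). Fix $f\in\mathcal{S}_\Gamma^1$ and write $B=\{\omega\in\Omega\mid f(\omega)\notin\mathrm{ex}\,\Gamma(\omega)\}$. The easy half of (i), namely $\mathcal{S}_{\mathrm{ex}\,\Gamma}^1\subseteq\mathrm{ex}\,\mathcal{S}_\Gamma^1$, is purely pointwise: if $f(\omega)\in\mathrm{ex}\,\Gamma(\omega)$ for $m$-almost every $\omega$ and $f=\tfrac12(f_1+f_2)$ with $f_1,f_2\in\mathcal{S}_\Gamma^1$, then $f_1(\omega)=f_2(\omega)=f(\omega)$ $m$-a.e., so $f_1=f_2=f$ in the selector space and $f$ is extreme. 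The reverse inclusion, and with it the whole lemma, will be delivered by (ii).

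For (ii) I would first record the pointwise picture through the set-valued map
$$
\Phi(\omega)=\bigl(\Gamma(\omega)-f(\omega)\bigr)\cap\bigl(f(\omega)-\Gamma(\omega)\bigr)=\{v\in\R^I\mid f(\omega)\pm v\in\Gamma(\omega)\}.
$$
Each $\Phi(\omega)$ is convex, symmetric about the origin, contains $\bold{0}$, and is compact because $\Gamma$ is bounded with closed convex values. The decisive observation is that $\Phi(\omega)=\{\bold{0}\}$ precisely when $f(\omega)\in\mathrm{ex}\,\Gamma(\omega)$: a nonzero $v\in\Phi(\omega)$ exhibits $f(\omega)$ as the midpoint of the two distinct points $f(\omega)\pm v$ of $\Gamma(\omega)$, and conversely any such representation yields a nonzero $v$. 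Hence $\{\omega\mid\Phi(\omega)\neq\{\bold{0}\}\}=B$. Moreover the graph of $\Phi$ is measurable, being the intersection of the preimages of $\mathrm{Gr}\,\Gamma$ under the measurable maps $(\omega,v)\mapsto(\omega,f(\omega)\pm v)$.

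The heart of the matter is to select, measurably, a vector $g(\omega)\in\Phi(\omega)$ that is nonzero exactly on $B$. Since $I\subseteq\mathbb{N}$, the space $\R^I$ is Polish, so the graph-measurable, compact-valued map $\Phi$ admits a Castaing representation $\{g_n\}$ of measurable selectors whose values are pointwise dense in $\Phi(\omega)$; this is exactly the content of the cited special case of Castaing--Valadier. Setting $B_n=\{\omega\in B\mid g_n(\omega)\neq\bold{0}\}$, density forces $B=\bigcup_n B_n$ (if every $g_n(\omega)$ vanished at some $\omega\in B$, the closure of $\{g_n(\omega)\}$ would be $\{\bold{0}\}\neq\Phi(\omega)$). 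Disjointifying via $C_n=B_n\setminus\bigcup_{k<n}B_k$, I would put $g=\sum_n g_n\chi_{C_n}$ and $g=\bold{0}$ off $B$. Then $g$ is measurable, $g(\omega)\in\Phi(\omega)$ for all $\omega$ and $g(\omega)\neq\bold{0}$ on $B$; boundedness of $\Gamma$ makes $g$ bounded, hence each component $m_i$-integrable, so $f\pm g\in\mathcal{S}_\Gamma^1$, which is (ii).

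The remaining inclusion in (i) now follows by contraposition: if $f\notin\mathcal{S}_{\mathrm{ex}\,\Gamma}^1$, then $B$ is $m$-nonnull, so the $g$ of (ii) is not $m$-a.e.\ zero and $f=\tfrac12\bigl((f+g)+(f-g)\bigr)$ is a nontrivial convex combination of distinct elements of $\mathcal{S}_\Gamma^1$, whence $f\notin\mathrm{ex}\,\mathcal{S}_\Gamma^1$. The hard part will be the measurable selection step in (ii): one must produce a \emph{single} $g$ that vanishes off $B$ yet is genuinely nonzero throughout $B$, and this is precisely what the Castaing representation, together with the measurability of $\mathrm{Gr}\,\Phi$, secures; the verification that $g$ is integrable is then routine from the boundedness of $\Gamma$ under the standing completeness hypotheses on the $X_i$.
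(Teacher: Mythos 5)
Your proof is correct, but be aware that the paper offers no proof of this lemma at all: it records the statement as a special case of Theorems IV.14 and IV.15 of Castaing--Valadier, so what you have written is in effect a reconstruction of that argument from the more primitive Castaing representation theorem rather than an alternative to anything in the text. The reconstruction is sound: the symmetric multifunction $\Phi(\omega)=\{v\in\R^I\mid f(\omega)\pm v\in\Gamma(\omega)\}$ detects non-extremality exactly as you say (for a convex set, failure of extremality is equivalent to being the midpoint of two distinct points), its values are compact because bounded subsets of $\R^I$ sit inside products of compact intervals, the disjointified Castaing representation yields a single selector of $\Phi$ that vanishes precisely off $B=\{\omega\mid f(\omega)\notin\mathrm{ex}\,\Gamma(\omega)\}$ (and $B=\bigcup_n\{g_n\neq 0\}$ is then measurable for free), boundedness gives integrability of $g$ under the sequential completeness of the $X_i$ in force wherever the lemma is applied, and both inclusions in (i) follow as you indicate. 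The one step you should not wave at is the passage from graph measurability of $\Phi$ --- which is all that membership of $\Gamma$ in $\M_{\mathit{bfc}}(\Omega,\R^I)$ hands you --- to the existence of a Castaing representation: Castaing's theorem requires $\Phi^{-1}(U)\in\F$ for open $U$, and to deduce this from measurability of $\mathrm{Gr}\,\Phi$ one must invoke the projection (von Neumann--Aumann) theorem, which in turn requires completing $\F$ with respect to a control measure. This is precisely the standing hypothesis of the cited Castaing--Valadier theorems, so your proof carries the same implicit assumption as the paper's citation, but it deserves an explicit sentence. What your version buys over the bare citation is a transparent accounting of where each hypothesis --- boundedness, closed convex values, graph measurability --- is actually used; what the citation buys is brevity and the delegation of exactly this measurability technicality.
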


The following result is a further extension of \cite[Theorem 4.3]{ks14} to the lcHs setting.  

\begin{thm}
\label{thm7}
If $X_i$ is a quasicomplete lcHs for each $i\in I$ and $m:\F\to \prod_{i\in I}X_i$ is a closed vector measure such that $\mathrm{dens}\,\prod_{i\in I}X_i<\kappa(\widehat{\F_E})$ for every $m$-\hspace{0pt}nonnull $E\in \F$, then $m$ satisfies the bang-\hspace{0pt}bang principle for every $\Gamma\in \M_{\mathit{bfc}}(\Omega,\R^I)$.
\end{thm}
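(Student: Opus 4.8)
The plan is to establish the two inclusions $\int\mathrm{ex}\,\Gamma\,dm\subseteq\int\Gamma\,dm$ and $\int\Gamma\,dm\subseteq\int\mathrm{ex}\,\Gamma\,dm$ separately. The first is immediate from $\mathrm{ex}\,\Gamma(\omega)\subseteq\Gamma(\omega)$, which gives $\mathcal{S}^1_{\mathrm{ex}\,\Gamma}\subseteq\mathcal{S}^1_\Gamma$. For the reverse inclusion I would fix $f\in\mathcal{S}^1_\Gamma$ and exhibit a selector of $\mathrm{ex}\,\Gamma$ with the same integral, by passing to an extreme point of the fiber
$$
\mathcal{S}^1_\Gamma(f)=\bigl\{\,h\in\mathcal{S}^1_\Gamma\mid m_i(h_i)=m_i(f_i)\ \text{for all }i\in I\,\bigr\}.
$$
First I would check that $\mathcal{S}^1_\Gamma(f)$ is a nonempty convex weakly compact subset of $\prod_{i\in I}L^1(m_i)$: the boundedness of $\Gamma$ confines each component of a selector to a fixed order interval of $L^\infty(m_i)$, which by Lemma \ref{lem2} is weakly compact in $L^1(m_i)$, so $\mathcal{S}^1_\Gamma$ is weakly compact by Tychonoff; intersecting with the weakly closed affine constraint $m_i(h_i)=m_i(f_i)$ (weak continuity of $h_i\mapsto m_i(h_i)$ is Lemma \ref{lem1}) preserves weak compactness. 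By the Krein--Milman theorem $\mathcal{S}^1_\Gamma(f)$ has an extreme point $h^*$, and $m(h^*)=m(f)$.

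The decisive step is to show that this $h^*$ is a selector of $\mathrm{ex}\,\Gamma$. Arguing by contradiction, suppose the set $B=\{\omega\mid h^*(\omega)\notin\mathrm{ex}\,\Gamma(\omega)\}$ is $m$-nonnull. Lemma \ref{lem5}(ii) then produces a measurable $g:\Omega\to\R^I$ with $h^*\pm g\in\mathcal{S}^1_\Gamma$ and $g\neq\mathbf{0}$ on $B$. I would like to convert $g$ into a perturbation with vanishing integral: writing $\nu(A)=\bigl(m_i(g_i\chi_A)\bigr)_{i\in I}$ for the indefinite integral of $g$, a vector measure on $\F$ with values in $\prod_{i\in I}X_i$ that is absolutely continuous with respect to $m$, it suffices to find $A\in\F$ with $\nu(A)=\tfrac12\nu(\Omega)$. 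Setting $g'=g(2\chi_A-1)$ then yields $m(g')=2\nu(A)-\nu(\Omega)=\mathbf{0}$ together with $h^*\pm g'\in\mathcal{S}^1_\Gamma$ (on $A$ these equal $h^*\pm g$, on $A^c$ they equal $h^*\mp g$). Since $g'$ does not vanish where $g$ does not, $h^*\pm g'$ are distinct members of $\mathcal{S}^1_\Gamma(f)$ with midpoint $h^*$, contradicting extremality; hence $h^*\in\mathcal{S}^1_{\mathrm{ex}\,\Gamma}$ and $m(f)=m(h^*)\in\int\mathrm{ex}\,\Gamma\,dm$.

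The existence of the splitting set $A$ is precisely the statement that $\tfrac12\nu(\Omega)$ lies in the convex range of $\nu$, which I would deduce by applying Theorem \ref{thm5} to $\nu$. This is the step I expect to be the main obstacle, since Theorem \ref{thm5} requires $\nu$ to be closed and to satisfy $\mathrm{dens}\,\prod_{i\in I}X_i<\kappa(\widehat{\F^\nu_E})$ for every $\nu$-nonnull $E$, while replacing $m$ by $\nu$ can only pass to a quotient of the measure algebra and so, a priori, can only \emph{lower} the Maharam type. The resolution is to use the closedness of $m$: then $\widehat{\F}$ is Dedekind complete, and the $\nu$-null ideal, being the intersection over $x^*$ of the null ideals of the scalar measures $\langle x^*,\nu\rangle$---each of which is a principal ideal of a Dedekind complete measure algebra, and whose generators therefore admit an infimum---is again principal. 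Consequently, for every $\nu$-nonnull $E$ the algebra $\widehat{\F^\nu_E}$ is isomorphic to a principal ideal $\widehat{\F_F}$ of $\widehat{\F}$ generated by some $m$-nonnull $F$; this simultaneously shows that $\nu$ is closed and that $\kappa(\widehat{\F^\nu_E})=\kappa(\widehat{\F_F})>\mathrm{dens}\,\prod_{i\in I}X_i$ by the hypothesis on $m$. With the hypotheses of Theorem \ref{thm5} verified for $\nu$ (the range $\prod_{i\in I}X_i$ being quasicomplete as a product of quasicomplete spaces), the measure $\nu$ is a Lyapunov measure, the splitting set $A$ exists, and the argument closes.
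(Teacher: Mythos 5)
Your architecture coincides with the paper's: both arguments realize a given point of $\int\Gamma\,dm$ as the image under the integration operator $T$ of an extreme point $h^*$ of the weakly compact convex fiber $T^{-1}(\hat{x})\cap\mathcal{S}^1_\Gamma$ (weak compactness from Lemma \ref{lem2}, weak-\hspace{0pt}to-\hspace{0pt}weak continuity of $T$ from Lemma \ref{lem1}), and both derive a contradiction from Lemma \ref{lem5}(ii) by converting the perturbation $g$ into one with vanishing integral via a Lyapunov-\hspace{0pt}type bisection. The divergence is in how that bisection is performed, which is exactly the step you identify as the main obstacle. The paper applies Theorem \ref{thm5} to $m$ itself and picks $F\subset E$ with $m(F)=\frac{1}{2}m(E)$, whereas you bisect the indefinite integral $\nu(A)=(m_i(g_i\chi_A))_{i\in I}$; your instinct is the right one, since what must vanish is $\nu(E\setminus F)-\nu(F)$, a quantity controlled by $\nu$ and not by $m$.

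However, your verification that $\nu$ satisfies the hypotheses of Theorem \ref{thm5} is the weak link. You assert that the null ideal of each scalar measure $\langle x^*,\nu\rangle$ is a principal ideal of the Dedekind complete algebra $\widehat{\F}$, and that the $\nu$-\hspace{0pt}null ideal, as an intersection of principal ideals, is again principal. Neither claim is proved, and the first is genuinely nontrivial: a maximal disjoint family of $\langle x^*,\nu\rangle$-\hspace{0pt}null elements does have a supremum $\widehat{N}$ in $\widehat{\F}$, and every null element lies below $\widehat{N}$, but showing that $\widehat{N}$ is itself $\langle x^*,\nu\rangle$-\hspace{0pt}null does not follow from countable additivity when the family is uncountable; it requires a form of complete additivity relative to $\widehat{\F}$ that you would have to extract from the closedness of $m$. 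There is a route that avoids the issue entirely: apply Theorem \ref{thm5} to the augmented measure $A\mapsto(m(A),\nu(A))$ on $\F_E$, with values in $\prod_{i\in I}X_i\times\prod_{i\in I}X_i$. Its null sets are exactly the $m$-\hspace{0pt}null sets, so the Maharam-\hspace{0pt}type hypothesis transfers verbatim rather than having to be re-\hspace{0pt}established for a quotient algebra; the density of the range space is unchanged; and closedness transfers because the boundedness of $g$ makes each semivariation of $\nu$ dominated by a constant multiple of the corresponding semivariation of $m$, so the two measures induce the same uniformity on $\widehat{\F_E}$. A set $F$ bisecting this augmented measure bisects $\nu(E)$ in particular, and your construction $g'=g(2\chi_F-1)$ on $E$ then closes the argument exactly as you describe.
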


\begin{proof}
It follows from the boundedness of $\Gamma$ that $\mathcal{S}_\Gamma^1$ is a bounded closed subset of $\prod_{i\in I}L^\infty(m_i)$, and hence, it is a weakly compact subset of $\prod_{i\in I}L^1(m_i)$ by Lemma \ref{lem2}. Define the integration operator $T:\prod_{i\in I}L^1(m_i)\to \prod_{i\in I}X_i$ by $T((f_i)_{i\in I})=(m_i(f_i))_{i\in I}$ for $f_i\in L^1(m_i)$ with $i\in I$. Then $T$ is a linear operator that is continuous in the weak topologies for $\prod_{i\in I}L^1(m_i)$ and $\prod_{i\in I}X_i$ since the integration operator $f_i\mapsto m_i(f_i)$ is continuous in the weak topologies for $L^1(m_i)$ and $X_i$ by Lemma \ref{lem1}. Let $\hat{x}\in T(\mathcal{S}_\Gamma^1)=\int \Gamma dm$ be given arbitrarily. Then the set $T^{-1}(\hat{x})\cap \mathcal{S}_\Gamma^1$ is a weakly compact, convex subset of $\prod_{i\in I}L^1(m_i)$, and hence, it has an extreme point $\hat{f}$. It suffices to show that $\hat{f}\in \mathcal{S}_{\mathrm{ex}\,\Gamma}^1$. If $\hat{f}\not\in \mathcal{S}_{\mathrm{ex}\,\Gamma}^1$, then there exists an $m$-\hspace{0pt}nonnull set $E\in \F$ such that $\hat{f}(\omega)\not\in \mathrm{ex}\,\Gamma(\omega)$ for every $\omega\in E$. By the boundedness of $\Gamma$ and Lemma \ref{lem5}, there exists $g=(g_i)_{i\in I}\in \prod_{i\in I}L^1(m_i)$ such that $\hat{f}\pm g\in \mathcal{S}_{\Gamma}^1$ and $g\ne \bold{0}$ on $E$. Since $m$ is a Lyapunov measure, the range of $m$ is convex in $\prod_{i\in I}X_i$. Take $F\subset E$ with $m(F)=\frac{1}{2}m(E)$ and define $\hat{g}:\Omega\to X$ by
$$
\hat{g}(\omega)=
\begin{cases}
\hspace{0.35cm} g(\omega) & \text{if $\omega\in E\setminus F$}, \\
-g(\omega) & \text{if $\omega\in F$}, \\
\quad \bold{0} & \text{otherwise}. 
\end{cases}
$$
Then $\hat{f}=\frac{1}{2}(\hat{f}+\hat{g})+\frac{1}{2}(\hat{f}-\hat{g})$ and $\hat{f}\pm \hat{g}\in \mathcal{S}_{\Gamma}^1$. A simple calculation yields $\int \hat{g}dm=(m_i(\hat{g}_i))_{i\in I}=\bold{0}\in X$, and hence, $\hat{f}\pm \hat{g}\in T^{-1}(\hat{x})\cap \mathcal{S}_\Gamma^1$. This contradicts the fact that $\hat{f}$ is an extreme point of $T^{-1}(\hat{x})\cap \mathcal{S}_\Gamma^1$.
\end{proof}

\begin{lem}
\label{lem6}
Let $X_i$ be a lcHs and $m_i:\F\to X_i$ be a vector measure for each $i\in I$ and define the set by 
$$
\mathcal{I}_E=\left\{ (f_i)_{i\in I}\in \prod_{i\in I}L^\infty_E(m_i) \mid 0\le f_i\le 1\ \forall i\in I \right\}, \quad E\in \F. 
$$ 
We then have 
$$
\mathrm{ex}\,\mathcal{I}_E=\left\{ (\chi_{A_i})_{i\in I}\in \prod_{i\in I}L^\infty_E(m_i) \mid A_i\in \F_E \ \forall i\in I \right\}.
$$ 
\end{lem}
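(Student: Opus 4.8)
The plan is to recognize $\mathcal{I}_E$ as a Cartesian product of order intervals and then combine two standard facts: that the extreme points of a product of convex sets factor as the product of the extreme points of the factors, and that the extreme points of the order interval $\{0\le f\le 1\}$ in an $L^\infty$-space are exactly the indicator functions. Accordingly, I would first write $\mathcal{I}_E=\prod_{i\in I}\mathcal{K}_i$, where $\mathcal{K}_i:=\{f_i\in L^\infty_E(m_i)\mid 0\le f_i\le 1\}$ is a convex subset of $L^\infty_E(m_i)$.

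Second, I would establish the product formula $\mathrm{ex}\,\prod_{i\in I}\mathcal{K}_i=\prod_{i\in I}\mathrm{ex}\,\mathcal{K}_i$, which is purely formal and holds for an arbitrary index set. For the inclusion $\subset$, given an extreme point $(f_i)_{i\in I}$ of the product and a coordinate $j$ with $f_j=\frac{1}{2}(g+h)$ and $g,h\in \mathcal{K}_j$, I replace the $j$-th coordinate of $(f_i)_{i\in I}$ by $g$ and by $h$ respectively, obtaining two elements of the product whose midpoint is $(f_i)_{i\in I}$; extremeness then forces $g=h=f_j$, so $f_j\in \mathrm{ex}\,\mathcal{K}_j$. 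For $\supset$, if each $f_i$ is extreme in $\mathcal{K}_i$ and $(f_i)_{i\in I}=\frac{1}{2}((g_i)_{i\in I}+(h_i)_{i\in I})$ in the product, then reading off coordinates gives $g_i=h_i=f_i$ for each $i$.

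Third, I would identify $\mathrm{ex}\,\mathcal{K}_i=\{\chi_{A_i}\mid A_i\in \F_E\}$. That each indicator is extreme is immediate: if $\chi_{A_i}=\frac{1}{2}(g+h)$ with $0\le g,h\le 1$, then $g+h=2$ forces $g=h=1$ $m_i$-a.e.\ on $A_i$, while $g+h=0$ forces $g=h=0$ $m_i$-a.e.\ off $A_i$, so $g=h=\chi_{A_i}$. Conversely, if $f_i\in \mathcal{K}_i$ is not $m_i$-a.e.\ $\{0,1\}$-valued, then $\{0<f_i<1\}$ is $m_i$-nonnull; writing it as $\bigcup_{n\ge 2}\{1/n\le f_i\le 1-1/n\}$, some level set $B:=\{1/n\le f_i\le 1-1/n\}$ is $m_i$-nonnull, and setting $\varepsilon=1/n$ the functions $f_i\pm \varepsilon\chi_B$ both lie in $\mathcal{K}_i$ and are distinct in $L^\infty_E(m_i)$, exhibiting $f_i=\frac{1}{2}((f_i+\varepsilon\chi_B)+(f_i-\varepsilon\chi_B))$ as a nontrivial convex combination, whence $f_i\notin \mathrm{ex}\,\mathcal{K}_i$. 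Since $f_i$ is supported on $E$, the set $B$ lies in $\F_E$, so the admissible perturbation stays inside $\mathcal{K}_i$.

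The only nonformal point—though still routine—is the converse direction of the third step: the perturbation $\pm\varepsilon\chi_B$ must be built from a non-null level set on which $f_i$ is bounded away from both $0$ and $1$, which is precisely where the $m_i$-null structure of $L^\infty_E(m_i)$ is used. Everything else, including the passage to the infinite product, is formal, and chaining the three steps yields the asserted description of $\mathrm{ex}\,\mathcal{I}_E$.
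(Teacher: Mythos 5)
Your proposal is correct and follows essentially the same route as the paper: the easy direction that indicator tuples are extreme, and for the converse the identification of an $m_j$-nonnull set on which some coordinate $f_j$ satisfies $\varepsilon\le f_j\le 1-\varepsilon$, followed by the perturbation $f_j\pm\varepsilon\chi_B$ in that single coordinate. Your intermediate step factoring through $\mathrm{ex}\prod_i\mathcal{K}_i=\prod_i\mathrm{ex}\,\mathcal{K}_i$ and your explicit justification of the ``without loss of generality'' level-set selection are just cleaner packagings of the same argument.
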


\begin{proof}
Clearly, any sequence of characteristic functions $(\chi_{A_i})_{i\in I}$ is an extreme point of $\I_E$. On the other hand, if an extreme point $(f_i)_{i\in I}$ in $\I_E$ is not a sequence of characteristic functions, then without loss of generality, we may assume that there exist $\varepsilon>0$ and $m_j$-\hspace{0pt}nonnull set $A\in \F_E$ for some index $j\in I$ such that $\varepsilon\le f_j\le 1-\varepsilon$ on $A$. Then the sequence of functions $[f_j\pm \varepsilon \chi_A,(f_i)_{i\in I\setminus \{ j \}}]$ belongs to $\I_E$ and
$$
(f_i)_{i\in I}=\frac{1}{2}\left[ f_j+\varepsilon \chi_A,(f_i)_{i\in I\setminus \{ j \}} \right]+\frac{1}{2}\left[ f_j-\varepsilon \chi_A,(f_i)_{i\in I\setminus \{ j \}} \right],
$$
a contradiction to the fact that $(f_i)_{i\in I}$ is an extreme point of $\I_E$. 
\end{proof}

\begin{thm}
\label{thm6}
Let $X_i$ be a quasicomplete separable lcHs with a vector subspace that is isomorphic to an infinite-\hspace{0pt}dimensional Banach space for each $i\in I$ and $m:\F\to \prod_{i\in I}X_i$ is a closed vector measure. Then $m$ is saturated if and only if every vector measure in $\mathit{ca}(\Omega,m,\prod_{i\in I}X_i)$ satisfies the bang-\hspace{0pt}bang principle for every $\Gamma\in \M_\textit{bfc}(\Omega,\R^I)$.
\end{thm}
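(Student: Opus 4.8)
The plan is to regard $X:=\prod_{i\in I}X_i$ as a single space and reduce the statement to the two main results already established. Since $I\subseteq\mathbb{N}$ is countable, $X$ is a quasicomplete separable lcHs with $\mathrm{dens}\,X=\aleph_0$, and it contains a vector subspace isomorphic to an infinite-\hspace{0pt}dimensional Banach space, namely the image of any one of the given Banach subspaces of a factor $X_j$ under the coordinate embedding $X_j\hookrightarrow X$. Thus $X$ meets the standing hypotheses of both Theorem \ref{thm7} and Theorem \ref{thm8} (with $X=Y$). For the ``only if'' direction, assume $m$ is saturated. Every $n\in\mathit{ca}(\Omega,m,\prod_{i\in I}X_i)$ is, by definition, closed and absolutely continuous with respect to $m$, so the absolute-\hspace{0pt}continuity characterization of saturation recorded in Section 2 makes each such $n$ saturated; that is, $\kappa(\widehat{\F_E})\ge\aleph_1$ for every $n$-\hspace{0pt}nonnull $E\in\F$. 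As $\mathrm{dens}\,X=\aleph_0<\aleph_1\le\kappa(\widehat{\F_E})$, the density hypothesis of Theorem \ref{thm7} holds for $n$, and that theorem then delivers the bang-\hspace{0pt}bang principle for every $\Gamma\in\M_{\mathit{bfc}}(\Omega,\R^I)$.

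For the ``if'' direction I would show directly that the bang-\hspace{0pt}bang hypothesis forces every $n\in\mathit{ca}(\Omega,m,\prod_{i\in I}X_i)$ to be a Lyapunov measure, after which the implication (iv)$\Rightarrow$(i) of Theorem \ref{thm8} yields the saturation of $m$. Fix such an $n$ and an arbitrary $E\in\F$, and test the hypothesis against the constant multifunction $\Gamma_E$ taking the value $\{t\mathbf{1}\mid 0\le t\le 1\}$ on $E$ and $\{\mathbf{0}\}$ off $E$, where $\mathbf{1}=(1,1,\dots)\in\R^I$. This $\Gamma_E$ lies in $\M_{\mathit{bfc}}(\Omega,\R^I)$: it is graph measurable, and its values are bounded (contained in $[0,1]^I$), closed and convex. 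Because its nonzero value is a segment, its selectors are exactly the functions $t\mathbf{1}$ with $0\le t\le 1$ supported on $E$, whence $\int\Gamma_E\,dn=\{n(f)\mid 0\le f\le 1,\ f\in L^\infty_E(n)\}$, the one-\hspace{0pt}parameter family of scalar integrals. The segment has only the two extreme points $\mathbf{0}$ and $\mathbf{1}$, so its extremal selectors are the functions $\chi_A\mathbf{1}$ with $A\in\F_E$, and therefore $\int\mathrm{ex}\,\Gamma_E\,dn=\{n(A)\mid A\in\F_E\}=n(\F_E)$. The bang-\hspace{0pt}bang identity thus becomes $n(\F_E)=\{n(f)\mid 0\le f\le 1,\ f\in L^\infty_E(n)\}$. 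The right-\hspace{0pt}hand side is the image, under the weakly continuous operator $T_n$ of Lemma \ref{lem1}, of the bounded closed convex set $\{f\in L^\infty_E(n)\mid 0\le f\le 1\}$, which is weakly compact in $L^1(n)$ by Lemma \ref{lem2}; hence $n(\F_E)$ is weakly compact and convex. Since $E$ was arbitrary, $n$ is a Lyapunov measure.

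The main obstacle, and the one genuinely new step, is the choice of the test multifunction in the ``if'' direction. The range $n(\F_E)=\{(n_i(A))_{i\in I}\mid A\in\F_E\}$ of a measure into the product is coupled across coordinates, since a single set $A$ enters every component, whereas the full cube $[0,1]^I$ (whose extreme-\hspace{0pt}point structure is recorded in Lemma \ref{lem6}) decouples the coordinates, so that its bang-\hspace{0pt}bang identity expresses only that each component $n_i$ is a Lyapunov measure. The diagonal segment $\{t\mathbf{1}\}$ is precisely the device that reinstates the coupling: forcing all coordinates to share one scalar function collapses the bang-\hspace{0pt}bang identity exactly onto the coordinate-\hspace{0pt}coupled range formula needed as input to Theorem \ref{thm8}. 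Once this multifunction is in place, verifying its membership in $\M_{\mathit{bfc}}(\Omega,\R^I)$ and computing its two extremal selectors is routine, and the remainder is an assembly of Theorems \ref{thm7} and \ref{thm8} together with Lemmas \ref{lem1} and \ref{lem2}.
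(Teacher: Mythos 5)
Your proposal is correct, and while the ``only if'' direction coincides with the paper's (saturation of $m$ passes to every $n\in\mathit{ca}(\Omega,m,\prod_{i\in I}X_i)$ by absolute continuity, and Theorem \ref{thm7} then applies since $\mathrm{dens}\,\prod_{i\in I}X_i=\aleph_0$), your ``if'' direction takes a genuinely different route. The paper tests the bang-\hspace{0pt}bang hypothesis against the full cube $\I_E$ and invokes Lemmas \ref{lem5} and \ref{lem6} to identify $\int\mathrm{ex}\,\Gamma_E\,dn$ with $\{(n_i(A_i))_{i\in I}\mid A_i\in\F_E\ \forall i\}$, which it then equates with $n(\F_E)$; as you observe, that set is the \emph{decoupled} product $\prod_{i\in I}n_i(\F_E)$ and only contains the coupled range $\{(n_i(A))_{i\in I}\mid A\in\F_E\}$, so the cube argument literally delivers only that each component $n_i$ is a Lyapunov measure. (The paper's proof can still be salvaged because the non-\hspace{0pt}Lyapunov witness produced in the proof of Theorem \ref{thm8}(iv)$\Rightarrow$(i) lives in a single coordinate subspace, where coupled and decoupled ranges coincide, but this repair is not spelled out there.) Your diagonal segment $\{t\mathbf{1}\mid 0\le t\le 1\}$ forces every selector to carry one scalar function in all coordinates, so the bang-\hspace{0pt}bang identity collapses exactly onto $n(\F_E)=\{n(f)\mid 0\le f\le 1,\ f\in L^\infty_E(n)\}$, and weak compactness and convexity of the right-\hspace{0pt}hand side via Lemmas \ref{lem1} and \ref{lem2} gives directly that $n$ itself is a Lyapunov measure, feeding cleanly into Theorem \ref{thm8}(iv)$\Rightarrow$(i). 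Your version also dispenses with Lemma \ref{lem6} entirely (the segment has only the two extreme points $\mathbf{0}$ and $\mathbf{1}$, so its extremal selectors are computed by inspection). What the paper's choice buys is uniformity with the control-\hspace{0pt}system applications that follow, where the cube and its extreme-\hspace{0pt}point structure are reused; what yours buys is a proof of the coupled-\hspace{0pt}range statement that is airtight as written.
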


\begin{proof}
Suppose that every $n\in \mathit{ca}(\Omega,m,\prod_{i\in I}X_i)$ satisfies the bang-\hspace{0pt}bang principle for every $\Gamma\in \M_\textit{bfc}(\Omega,\R^I)$. Take any $E\in \F$ and define the multifunction $\Gamma_E:\Omega\twoheadrightarrow \R^I$ by
$$
\Gamma_E(t)=\{ (f_i(t))_{i\in I}\in \R^I \mid (f_i)_{i\in I}\in \I_E \}.
$$
We then have $\Gamma_E\in \M_\textit{bfc}(\Omega,\R^I)$ and $\mathcal{S}^1_{\Gamma_E}=\I_E$. Since $\I_E$ is weakly compact in the product topology of $\prod_{i\in I}L^1(n_i)$ by Lemma \ref{lem2} and the integration operator $T:\prod_{i\in I}L^1(n_i)\to \prod_{i\in I}X_i$ provided in the proof of Theorem \ref{thm7} is continuous for the weak topologies in $\prod_{i\in I}L^1(n_i)$, the set $T(\mathcal{S}^1_{\Gamma_E})=\int \Gamma_Edn$ is weakly compact and convex in $X$. By Lemma \ref{lem5}, we have $\mathcal{S}^1_{\mathrm{ex}\,\Gamma_E}=\mathrm{ex}\,\mathcal{S}^1_{\Gamma_E}=\mathrm{ex}\,\I_E$, and hence, it follows from Lemma \ref{lem6} that
$$
\int \Gamma_E dn=\int \mathrm{ex}\,\Gamma_E dn=\left\{ (n_i(A_i))_{i\in I}\in X\mid A_i\in \F_E \ \forall i\in I \right\}=n(\F_E). 
$$
The bang-\hspace{0pt}bang principle for $n$ implies that $n(\F_E)$ is weakly compact and convex in $\prod_{i\in I}X_i$ for every $E\in \F$. Hence, every $n\in \mathit{ca}(\Omega,m,\prod_{i\in I}X_i)$ is a Lyapunov measure. The saturation of $m$ follows from Theorem \ref{thm8}. The converse implication is a consequence of Theorem \ref{thm7}. 
\end{proof}

\subsection{Lyapunov Control Systems in LcHs}
As an application of the bang-\hspace{0pt}bang principle, we examine Lyapunov control systems developed by \citeauthor{kk75} \cite{kl73,kk73,kk75,kk78,kn75,kn76a,kn77}. 

Let $X$ be a lcHs and $X^I$ be the product space $\prod_{i\in I}X_i$ with $X_i=X$ for each $i\in I$. A vector measure $m:\F\to X^I$ is a \textit{control system} in $X^I$ if the sum $\sum_{i\in I}x_i$ is (unconditionally) convergent in $X$ for every $x_i\in m_i(\F)$ with $i\in I$. The attainable set for a control system $m:\F\to X^I$ and a measurable correspondence $\Gamma:\Omega\twoheadrightarrow \R^I$ is defined by
$$
\A_\Gamma(m)=\left\{ \sum_{i\in I}m_i(f_i)\in X\mid (f_i)_{i\in I}\in \mathcal{S}^1_\Gamma, \text{ $\sum_{i\in I}m_i(f_i)$ exists} \right\}.
$$

\begin{df}
A vector measure $m:\F\to X^I$ is a \textit{Lyapunov control system} for a multifunction $\Gamma:\Omega\twoheadrightarrow \R^I$ if $\A_\Gamma(m)=\A_{\mathrm{ex}\,\Gamma}(m)$. 
\end{df}

We provide a necessary and sufficient condition for control systems to be Lyapunov in terms of the saturation property. 

\begin{thm}
Let $X$ be a quasicomplete separable lcHs and $m:\F\to X^I$ be a closed vector measure. Then $m$ is saturated if and only if every vector measure in $\mathit{ca}(\Omega,m,X^I)$ is a Lyapunov control system for every $\Gamma\in \M_{\textit{bfc}}(\Omega,\R^I)$. 
\end{thm}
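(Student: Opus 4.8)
The plan is to reduce everything to the bang-bang principle already available in Theorem~\ref{thm7} (and Theorem~\ref{thm6}) through a single linear map. Write $\Sigma$ for the partially defined map $\prod_{i\in I}X_i\to X$, $\Sigma((x_i)_{i\in I})=\sum_{i\in I}x_i$, with domain the tuples whose unconditional sum converges; the control-system hypothesis guarantees that every tuple arising from $\int\Gamma\,dn$ lies in this domain. Comparing definitions, $\A_\Gamma(n)=\Sigma\bigl(\int\Gamma\,dn\bigr)$ and $\A_{\mathrm{ex}\,\Gamma}(n)=\Sigma\bigl(\int\mathrm{ex}\,\Gamma\,dn\bigr)$, so $\Sigma$ transports the bang-bang equality $\int\Gamma\,dn=\int\mathrm{ex}\,\Gamma\,dn$ onto the Lyapunov-control-system equality $\A_\Gamma(n)=\A_{\mathrm{ex}\,\Gamma}(n)$. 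The degenerate case $X=\{0\}$ is vacuous and is set aside.

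For sufficiency I would pass through the bang-bang principle. Suppose $m$ is saturated and fix $n\in\mathit{ca}(\Omega,m,X^I)$. Since $n$ is absolutely continuous with respect to the saturated $m$, $n$ is itself saturated by the absolute-continuity characterization of saturation in Section~2 (the implication (i)$\Rightarrow$(iii) of Theorem~\ref{thm8}). As $I\subset\mathbb{N}$ is countable and $X$ is separable, $X^I$ is separable, so $\mathrm{dens}\,X^I=\aleph_0<\aleph_1\le\kappa(\widehat{\F_E})$ for every $n$-nonnull $E\in\F$; thus the density hypothesis of Theorem~\ref{thm7} holds and $n$ satisfies the bang-bang principle for every $\Gamma\in\M_{\mathit{bfc}}(\Omega,\R^I)$. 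Applying $\Sigma$ to $\int\Gamma\,dn=\int\mathrm{ex}\,\Gamma\,dn$ yields $\A_\Gamma(n)=\A_{\mathrm{ex}\,\Gamma}(n)$, so each such $n$ is a Lyapunov control system.

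For necessity I would argue by contraposition and exhibit a witness. As in the proof of Theorem~\ref{thm8}, after first reducing to a nonatomic piece, non-saturation of $m$ yields an $m$-nonnull $E\in\F$ and a $p$-control measure $\mu_p$ such that $(E,\G,\mu_p)$ is a countably generated, nonatomic, finite measure space with $\widehat{\G}=\widehat{\F_E}$; this is essentially Lebesgue space and is not saturated. On this piece I would use the infinitude of $I$ to build an infinite \emph{coupled} system: component measures $n_i\ll\mu_p$ (hence $n_i\ll m$) together with a bounded, closed- and convex-valued $\Gamma$ whose coupling across the coordinates forces some attainable output $\gamma\in\A_\Gamma(n)$ to be produced only by selectors that are non-extreme on a nonnull set, whence $\gamma\notin\A_{\mathrm{ex}\,\Gamma}(n)$. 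Extending the $n_i$ from $\G$ to $\F$ by $n_i(A)=n_i(A\cap E)$ keeps $n$ closed and in $\mathit{ca}(\Omega,m,X^I)$, and this $n$ is not a Lyapunov control system.

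The hard part is precisely this construction. The intertwining by $\Sigma$ cannot be run backwards, since $\Sigma$ is far from injective: a failure of bang-bang in $X^I$ may be annihilated by summation, so one must engineer a failure that survives in $X$. Consequently one cannot simply invoke Theorem~\ref{thm6} or Theorem~\ref{thm8} — neither $X$ nor $X^I$ (with $I$ infinite) need contain an infinite-dimensional Banach subspace, so their necessity machinery is unavailable. The essential point to verify is that, although the output lies in the possibly low-dimensional $X$, the control $(f_i)_{i\in I}$ is genuinely infinite-dimensional, and that non-saturation of the Lebesgue piece obstructs the filling of the relevant slice of $\A_\Gamma(n)$ by extreme (bang-bang) selectors — the infinite-dimensional Lyapunov phenomenon that saturation is designed to preclude. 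Establishing that such a coupled $\Gamma$ and system $\{n_i\}$ exist and that the strict inclusion $\A_{\mathrm{ex}\,\Gamma}(n)\subsetneq\A_\Gamma(n)$ persists after summation is where the real work lies.
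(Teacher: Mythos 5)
Your sufficiency argument is sound and is exactly the paper's: saturation of $m$ passes to every $n\in\mathit{ca}(\Omega,m,X^I)$ by absolute continuity, separability of $X^I$ gives the density hypothesis of Theorem~\ref{thm7}, and applying the summation map to $\int\Gamma\,dn=\int\mathrm{ex}\,\Gamma\,dn$ yields $\A_\Gamma(n)=\A_{\mathrm{ex}\,\Gamma}(n)$. The gap is the necessity direction, which you reduce to an unexecuted construction of a ``coupled'' infinite system witnessing the failure of the Lyapunov control property; that is precisely where your proof stops, and the construction is in fact unnecessary. Your (correct) observation that $\Sigma$ is far from injective should point you toward test multifunctions on which the summation loses nothing, namely those supported in a \emph{single} coordinate: for $j\in I$ and $E\in\F$ set $\Gamma^j_E(\omega)=\{(f(\omega),(0)_{i\in I\setminus\{j\}})\mid 0\le f\le 1,\ f\in L^\infty_E(n_j)\}$. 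By Lemmas~\ref{lem5} and~\ref{lem6}, $\mathcal{S}^1_{\mathrm{ex}\,\Gamma^j_E}=\{(\chi_A,(0)_{i\ne j})\mid A\in\F_E\}$, so $\A_{\mathrm{ex}\,\Gamma^j_E}(n)=n_j(\F_E)$, while $\A_{\Gamma^j_E}(n)=\{n_j(f)\mid 0\le f\le 1,\ f\in L^\infty_E(n_j)\}$ is weakly compact and convex by Lemmas~\ref{lem1} and~\ref{lem2}. The Lyapunov-control-system hypothesis forces these two sets to coincide, so every component $n_j(\F_E)$ is weakly compact and convex for every $E$; since every member of $\mathit{ca}(\F,m,X)$ arises as a component of some $n\in\mathit{ca}(\Omega,m,X^I)$ (pad with zeros), every such measure is Lyapunov and Theorem~\ref{thm8}, (iv)$\Rightarrow$(i), delivers saturation. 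No new non-Lyapunov system has to be engineered; the heavy lifting was already done in Theorem~\ref{thm8}.

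You are right that the statement as printed omits the hypothesis, present in Theorems~\ref{thm6} and~\ref{thm8}, that $X$ contain a vector subspace isomorphic to an infinite-dimensional Banach space; the paper's appeal to Theorem~\ref{thm8} tacitly assumes it. But the correct response is to restore that hypothesis, not to attempt a proof of necessity without it, because without it the necessity direction is false: take $X=\R$, $I=\{1\}$ and $m$ Lebesgue measure on $[0,1]$. Then every $n\in\mathit{ca}(\Omega,m,\R)$ is a nonatomic scalar measure, the classical finite-dimensional bang-bang principle makes every such $n$ a Lyapunov control system for every $\Gamma\in\M_{\mathit{bfc}}(\Omega,\R)$, and yet $m$ is not saturated. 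So the coupled construction you defer to ``the real work'' cannot exist at the stated level of generality, and your necessity argument cannot be completed along the proposed lines.
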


\begin{proof}
Suppose that every $n\in \mathit{ca}(\Omega,m,X^I)$ is a Lyapunov control system for every $\Gamma\in \M_{\textit{bfc}}(\Omega,\R^I)$. Let $\Gamma^j_E\in \M_\textit{bfc}(\Omega,\R^I)$ be the multifunction defined by 
$$
\Gamma^j_E(\omega)=\{ (f(\omega),(0)_{i\in I\setminus \{ j \}})\in \R^I \mid 0\le f\le 1,\,f\in L^\infty_E(n_j) \},
$$
where the nonzero entry $f(\omega)$ of the defining sequence in $\Gamma^j_E(\omega)$ appears in the $j$th component in $\R^I$. In view of Lemmas \ref{lem5} and \ref{lem6}, we have
$$
\mathcal{S}_{\mathrm{ex}\,\Gamma^j_E}^1=\mathrm{ex}\,\mathcal{S}_{\Gamma^j_E}^1=\{ (\chi_A,(0)_{i\in I\setminus \{ j \}})\mid A\in \F_E \}.
$$
Since $n$ is a Lyapunov control system for $\Gamma^j_E$, we obtain
$$
\A_{\Gamma^j_{E}}(n)=\A_{\mathrm{ex}\,\Gamma^j_E}(n)=\{ n_j(A)\in X\mid A\in \F_E \}=n_j(\F_E).
$$
By Lemma \ref{lem2}, $\mathcal{S}_{\Gamma^j_E}^1$ is weakly compact in $\prod_{i\in I}L^1(n_i)$ and by Lemma \ref{lem1}, $\A_{\Gamma^j_{E}}(n)$ is weakly compact and convex in $X$. Therefore, $n_j(\F_E)$ is weakly compact and convex in $X$ for every $E\in \F$, and hence, $n$ is a Lyapunov measure. The saturation of $n$ follows from Theorem \ref{thm8}. The converse implication is a consequence of Theorem \ref{thm7}. 
\end{proof}

\end{document}